\newtheorem{Theorem}{Theorem}[section]
\newtheorem{Proposition}[Theorem]{Proposition}
\newtheorem{Definition}[Theorem]{Definition}
\newtheorem{Lemma}[Theorem]{Lemma}
\newtheorem{Corollary}[Theorem]{Corollary}
\newtheorem{Remark}[Theorem]{Remark}
\newtheorem{Ex}{Example}[section]
\newcommand{\Halmos}{\hfill$\Box$}
\newenvironment{Example}{\begin{Ex}\rm}{\smallskip\end{Ex}}
\newtheorem{proofhead}{Proof.}
\newenvironment{proof}
    {
        \par
        \begin{proofhead}
        \normalfont
    }
    {   \qed
        \end{proofhead}
        \par
    }
\newcommand{\bb}{\mathbb}
\def \pr{{\bb P}}
\newcommand{\qed}{\hfill$\Box$}
\newcommand{\refs}[1]{(\ref{#1})}
\newcounter{mylistcnt}
\renewcommand{\themylistcnt}{{\rm({\roman{mylistcnt}})}}
\newcounter{zad}
\newcommand{\toi}{\to\infty}
\newenvironment{namelist}[1]{%
\begin{list}{}
     {
      
      \settowidth{\labelwidth}{#1}
      \setlength{\leftmargin}{1.1\labelwidth}
               }
      }{%
\end{list}}
\begin{document}

\title{Asymptotic properties of extremal Markov processes driven by Kendall convolution}%
\author{
Marek  Arendarczyk$^1$, Barbara Jasiulis - Go\l dyn$^2$, Edward Omey$^3$
\vspace*{2ex} \\
$^{1,2}$Mathematical Institute, University of Wroc\l aw, \\ pl. Grunwaldzki 2/4, 50-384 Wroc\l aw, Poland.\\
$^{3}$Faculty of Economics and Business-Campus Brussels, \\ KU Leuven, Warmoesberg 26, 1000 Brussels, Belgium
\vspace*{2ex} \\
E-mail: $^1$marendar@math.uni.wroc.pl, \\
$^2$jasiulis@math.uni.wroc.pl, \\
$^3$edward.omey@kuleuven.be
}

\vspace{0.6cm}
\maketitle
\tableofcontents
\newpage
\begin{abstract}
This paper is devoted to the analysis of the finite-dimensional distributions and asymptotic behavior of extremal Markov processes connected to the Kendall convolution.
In particular, based on its stochastic representation, we provide general formula for finite dimensional distributions of the random walk driven by the Kendall convolution for a large class of step size distributions.
Moreover, we prove limit theorems for random walks and connected continuous time stochastic process.

\noindent {\bf Key words:} Markov process; Extremes; Kendall convolution; Regular variation; Williamson transform;
Limit theorems; Exact asymptotics \\
{\bf Mathematics Subject Classification.}  60G70, 60F05, 44A35, 60J05, 60J35, 60E07, 60E10.
\end{abstract}

\section{Introduction}

The Kendall convolution being the main building block in the construction of the extremal Markov process called Kendall random walk $\{X_n \colon n \in \mathbb{N}_0 \}$
is an important example of generalization of the convolutions corresponding to classical sum and to classical maximum.
Originated by Urbanik \cite{Urbanik64} (see also \cite{King}) generalized convolutions regain popularity in recent years (see, e.g., \cite{BJMR, renewalKendall, Misiewicz2019} and references therein).
In this paper we focus on the Kendall convolution (see, e.g., \cite{uniqKendall, Misiewicz2018}) which thanks to its connections to heavy tailed distributions,
Williamson transform \cite{factor}, Archimedian copulas, renewal theory \cite{renewalKendall}, non-comutative probability \cite{JasKula} or delphic semi-groups \cite{Gilewski1, Gilewski2, Kendall} presents high potential of applicability.
We refer to \cite{BJMR} for the definition and detailed description of the basics of theory of generalized convolutions, for a survey on the most important classes of generalized
convolutions, and a discussion on L\'evy processes and stochastic integrals based on that convolutions, as well as to \cite{KendallWalk} for the definition and
basic properties of Kendall random walks.

Our main goal is to study finite dimensional distributions and asymptotic properties of extremal Markov processes
connected to the Kendall convolution.
In particular we present many examples of finite dimensional distributions by the unit step characteristics, which create a new class of heavy tailed distributions with potential applications.
Innovative thinking about possible applications comes from the fact that the generalized convolution of two point-mass probability measures can be a non-degenerate probability measure.
In particular the Kendall convolution of two probability measures concentrated at $1$ is the Pareto distribution and consequently it generates heavy tailed distributions.
In this context the theory of regularly varying functions (see, e.g., \cite{Bin71, Bin84}) plays a crucial role. In this paper, regular variation techniques are used, to investigate asymptotic behavior of Kendall random walks and convergence of the finite dimensional distributions of continuous time stochastic processes constructed from Kendall random walks.

Most of the proofs presented in this paper are also based on the application of Williamson transform  that is a generalized characteristic function of the probability measure in the Kendall algebra (see, e.g., \cite{BJMR, Williamson}). The great advantage of the Williamson transform is that it is easy to invert. It is also worth to mention that,
this kind of transforms is generator of Archimedean copulas \cite{Neslehova1,Neslehova2}
and is used  to compute radial measures for reciprocal Archimedean copulas \cite{GenNesRiv}.
Asymptotic properties of the Williamson transform in the context of Archimedean copulas and extreme value theory were given in \cite{Larsson}.
In this context, we believe, that results on Williamson transform obtained in presented paper might be applicable in copula theory
which can be an interesting topic for future research.
\\

We start, in Section \ref{sec:def} with presenting the definition and basic properties of the Kendall convolution.
Next, the definition and construction of random walk under the Kendall convolution is presented, which leads to a stochastic representation of the process $\{X_n\}$. The basic properties of the transition probability kernel are also proved.
We refer to \cite{KendallWalk, Poisson, renewalKendall, factor} for discussions and proofs of further properties of the process $\{X_n\}$.
The structure of the processes considered here (see Definition \ref{def_krm}) is similar to the first order autoregressive maximal Pareto processes \cite{Arnold1, Arnold2, Lopez, Yeh}, max-AR(1) sequences \cite{Alpuim2}, minification processes \cite{Lewis, Lopez}, the max-autoregressive moving average processes MARMA \cite{Ferreira}, pARMAX and pRARMAX processes \cite{pRARMAX}.
Since the random walks form a class of extremal Markov chains, we believe that studying them will yield an important contribution to extreme value theory. Additionally, the sequences in the presented paper have interesting dependency relationships between factors which also justifies their potential applicability.

Section \ref{sec:fdd} is devoted to the analysis of the finite-dimentional distributions of the process $\{X_n\}$.
We derive general formula and present some important examples of processes with different types of step distributions which leads to new classes of heavy tailed distributions.

In Section \ref{sec:lim} we study different limiting behaviors of Kendall convolutions and connected processes. We present asymptotic properties of random walks under Kendall convolution using regular variation techniques.
In particular we show asymptotic equivalence between Kendall convolution and maximum convolution in the case of regularly varying step distribution $\nu$. The result shows the connection with classical extreme value theory
(see, e.g., \cite{Embr97}) and suggests
possible applications of the Kendall random walk in modelling phenomenons where independence between events can not be assumed.
Moreover limit theorems for Kendall random walks are given in the case of finite $\alpha$-moment as well as in the case of regularly varying tail of unit step distribution. Finally, we define continuous time stochastic process based on random walk under Kendall convolution and prove convergence of its finite-dimensional distributions using regular variation techniques. \\

{\it Notation.}

Through this paper, the distribution of the random
element $X$ is denoted by $\mathcal{L}(X)$.
By $\mathcal{P}_+$ we denote family of all probability measures on the Borel $\sigma$-algebra $\mathcal{B}(\mathbb{R}_+)$
 with $\mathcal{\mathbb{R}_+} := [0, \infty)$.
For a probability measure $\lambda \in \mathcal{P}_{+}$ and $a \in
\mathbb{R}_+$ the rescaling operator is given by $\mathbf{T}_a \lambda = \mathcal{L}(aX)$ if $\lambda = \mathcal{L}(X)$.
The set of all natural numbers including zero is denoted by $\mathbb{N}_0$. Additionally we use notation $\pi_{2\alpha}, \alpha > 0$ for a Pareto random measure with probability density function (pdf)
\begin{equation} \label{pareto:pdf}
    \pi_{2\alpha}\, (dy)
=
    2\alpha y^{-2\alpha-1} \pmb{1}_{[1,\infty)}(y)\, dy.
\end{equation}
Moreover, by
\[
		m_{\nu}^{(\alpha)}
	:=
		\int_0^\infty x^\alpha \nu(dx)
\]		
we denote the $\alpha$th moment of measure $\nu$. The truncated $\alpha$-moment of the measure $\nu$
with cumulative distribution function (cdf) $F$ is given by	
\[
H(t)
	:=
\int\limits_0^t y^{\alpha} F (dy).
\]

By $\nu_1 \vartriangle_{\alpha} \nu_2$ we denote the Kendall convolution of the measures $\nu_1, \nu_2 \in \mathcal{P}_{+}$.
For all $n \in \mathbb{N}$,  the Kendall convolution of $n$ identical measures $\nu$ is denoted by
$\nu^{ \vartriangle_{\alpha} n} := \nu \vartriangle_{\alpha} \dots \vartriangle_{\alpha} \nu$ (n-times).
By $F_n$ we denote the cumulative distribution function of the measure $\nu^{\vartriangle_{\alpha}n}$,
whereas for the tail distribution of $\nu^{\vartriangle_{\alpha}n}$ we use the standard notation $\overline{F}_n$.
By $\stackrel{d}{\to}$ we denote convergence in distribution, whereas
$\stackrel{fdd}{\to}$ denotes convergence of finite-dimentional distributions.
Finally, we say that a measurable and positive function $f(\cdot)$ is regularly varying at infinity
with index $\beta$ if, for all $x>0$, it satisfies $\lim_{t\rightarrow \infty }\frac{f(tx)}{f(t)}=x^{\beta }$
(see, e.g., \cite{Bin87}).

\section{Stochastic representation and basic properties of Kendall random walk} \label{sec:def}

We start with the definition of the Kendall generalized convolution (see, e.g., \cite{BJMR}, Section 2).
\begin{Definition} \label{Kendal:def}
The binary operation $\vartriangle_{\alpha} \colon \mathcal{P}_+^2 \rightarrow \mathcal{P}_+$ defined for point-mass measures by
\begin{equation*} 
\delta_x \vartriangle_{\alpha} \delta_y := T_M \left( \varrho^{\alpha} \pi_{2\alpha} + (1- \varrho^{\alpha}) \delta_1 \right),
\end{equation*}
where $\alpha > 0$, $\varrho = {m/M}$, $m = \min\{x, y\}$, $M = \max\{x, y\}$, is called the Kendall convolution.
The extension of $\vartriangle_{\alpha}$ for any $\mathcal{B}(\mathbb{R}_+)$ and  $\nu_1, \nu_2 \in \mathcal{P}_+$ is given by
\begin{equation} \label{conv}
\nu_1 \vartriangle_{\alpha} \nu_2 (A)  = \int\limits_{0}^{\infty} \int\limits_{0}^{\infty} \left(\delta_x \vartriangle_{\alpha} \delta_y\right) (A)\, \nu_1(dx) \nu_2(dy).
\end{equation}

\end{Definition}
\begin{Remark}
Note that the convolution of two point mass measures is a continuous measure that reduces to a Pareto distribution $\pi_{2\alpha}$ in case of $x = y = 1$, which is different than the classical convolution or maximum convolution algebra, where convolution of discrete measures yields also a discrete one.
\end{Remark}

In the Kendall convolution algebra the main tool used in the analysis of a measure $\nu$ is the Williamson transform
(see, e.g., \cite{Williamson}) that is characteristic function for Kendall convolution (see, e.g., \cite{BJMR}, Definition 2.2) and plays the same role as the classical Laplace or Fourier transform for convolutions defined by addition of independent random elements.
We refer to \cite{BJMR}, \cite{Misiewicz2018} and \cite{Urbanik64}, for the definition and detailed discussion on properties of generalized characteristic functions and its connections to generalized convolutions. Through this paper, the function $G(t)$ that is Williamson transform at point $\frac{1}{t}$
plays a crucial role in the analysis of Kendall convolutions and connected stochastic processes.

\begin{Definition} \label{Williamson:def}
The operation 
$G: \mathbb{R_+} \rightarrow \mathbb{R_+}$ given by
\begin{equation*} 
G(t) = \int\limits_{0}^{\infty} \Psi\left(\frac{x}{t}\right) \nu(dx), \quad \nu \in \mathcal{P}_+,
\end{equation*}
where
\begin{equation}\label{Psi}
\Psi\left(\frac{x}{t}\right) = \left( 1 - \left(\frac{x}{t}\right)^{\alpha} \right)_+,
\end{equation}
$a_+ = \max(0,a)$, $\alpha > 0$, is called the Williamson transform of measure $\nu$ at point $\frac{1}{t}$.
\end{Definition}

\begin{Remark}
Note that
$\Psi\left(\frac{x}{t}\right)$
as a function of $t$ is the Williamson transform of $\delta_x, x\ge 0$.
\end{Remark}

\begin{Remark}	\label{will:prod}
Due to Proposition 2.3 and Example 3.4 in \cite{BJMR} function $G\left(\frac{1}{t}\right)$ is a generalized characteristic function for the Kendall convolution.
Thus, the Williamson transform $G_n(t)$
of the measure $\nu^{ \vartriangle_{\alpha} n}$ has the following, important, property (see, e.g., Definition 2.2 in \cite{BJMR})
\begin{equation} \label{Williamson:n}
G_n(t)=G(t)^n.
\end{equation}
\end{Remark}

By using recurrence construction, we define a stochastic processes $\{X_n: n \in \mathbb{N}_0\}$, called the Kendall random walk
(see also \cite{KendallWalk, factor, renewalKendall}). Further, we show strict connection of the process $\{X_n\}$ to the Kendall convolution.

\begin{Definition} \label{def_krm} The stochastic process $\{X_n \colon n \in \mathbb{N}_0\}$ is a discrete time Kendall random walk  with parameter $\alpha>0$ and step distribution $\nu \in \mathcal{P}_+$ if there exist
\begin{namelist}{ll}
\item[\bf 1.] $\{Y_k\}$ i.i.d. random variables with distribution $\nu$,
\item[\bf 2.] $\{\xi_k\}$ i.i.d. random variables with uniform distribution on $[0,1]$,
\item[\bf 3.] $\{\theta_k\}$ i.i.d. random variables with Pareto distribution with $\alpha > 0$ and density
		\[
			\pi_{2\alpha}\, (dy)
				=
			2\alpha y^{-2\alpha-1} \pmb{1}_{[1,\infty)}(y)\, dy,
		\]
\end{namelist}
such that sequences $\{Y_k\}$, $\{\xi_k\}$, and $\{\theta_k\}$ are independent and
\[
X_0 = 1, \quad  X_1 = Y_1, \quad X_{n+1} = M_{n+1}\left[ \pmb{1}_{(\xi_n > \varrho_{n+1})} + \theta_{n+1} \pmb{1}_{(\xi_n < \varrho_{n+1})}\right],
\]
where 
\[
M_{n+1} = \max\{ X_n, Y_{n+1}\}, \quad m_{n+1} = \min\{ X_n, Y_{n+1}\}, \quad \varrho_{n+1} = \frac{m_{n+1}^{\alpha}}{M_{n+1}^{\alpha}}.
\]
\end{Definition}
In the next proposition we show that the process constructed in Definition \ref{def_krm} is a homogeneous Markov process driven by the Kendall convolution.
We refer to \cite{BJMR}, Section 4 for the proof and a general discussion of the existence of the Markov processes under generalized convolutions.
\begin{Proposition} \label{kernel:prop}
The process $\{X_n : n \in \mathbb{N}_0\}$ with the stochastic representation given by Definition \ref{def_krm} is a
homogeneous Markov process with transition probability kernel
\begin{equation} \label{kernel}
P_{n,n+k}(x, A)
:=
P_k(x, A)
=
\left( \delta_x \vartriangle_\alpha \nu^{\vartriangle_\alpha k} \right) (A),
\end{equation}
where $k, n \in \mathbb{N}, A \in
\mathcal{B}(\mathbb{R}_{+}), x\ge 0, \alpha >0$.
\end{Proposition}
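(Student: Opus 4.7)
The plan is to first identify the one-step conditional distribution of $X_{n+1}$ given $(X_n, Y_{n+1})$, then average out $Y_{n+1}$ against $\nu$ to read off the one-step transition kernel, and finally iterate via Chapman--Kolmogorov and the associativity of $\vartriangle_{\alpha}$ to obtain the $k$-step formula \refs{kernel}.

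For the one-step analysis, I would condition on $\{X_n = x,\, Y_{n+1} = y\}$. Under this event, $M_{n+1} = \max(x,y)$ and $\varrho_{n+1} = (\min(x,y)/\max(x,y))^{\alpha}$ become constants, and the remaining randomness in $X_{n+1}$ sits in $(\xi_n, \theta_{n+1})$, which by the independence assumptions in Definition \ref{def_krm} is independent of $(X_n, Y_{n+1})$. Since $\xi_n$ is uniform on $[0,1]$ and $\theta_{n+1} \sim \pi_{2\alpha}$, unpacking the recursion gives
\[
\mathcal{L}(X_{n+1} \mid X_n = x,\, Y_{n+1} = y) \;=\; T_{\max(x,y)}\bigl(\varrho^{\alpha} \pi_{2\alpha} + (1-\varrho^{\alpha})\delta_1\bigr) \;=\; \delta_x \vartriangle_{\alpha} \delta_y,
\]
with $\varrho = \min(x,y)/\max(x,y)$, where the last equality is Definition \ref{Kendal:def}. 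Integrating against $Y_{n+1} \sim \nu$, which is independent of $X_n$, and using the bilinear extension \refs{conv} yields $\mathcal{L}(X_{n+1} \mid X_n) = \delta_{X_n} \vartriangle_{\alpha} \nu$.

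To upgrade this to the full Markov property I would note that $(Y_{n+1}, \xi_n, \theta_{n+1})$ is independent of $\sigma(Y_1,\ldots,Y_n,\xi_1,\ldots,\xi_{n-1},\theta_1,\ldots,\theta_n)$, which contains $\sigma(X_0,\ldots,X_n)$. Hence the conditional law of $X_{n+1}$ given the entire past depends only on $X_n$, and it is the same for every $n$, so $\{X_n\}$ is a time-homogeneous Markov chain with one-step kernel $P_1(x,\cdot) = \delta_x \vartriangle_{\alpha} \nu$. The formula \refs{kernel} then follows by induction on $k$: assuming $P_k(y,\cdot) = \delta_y \vartriangle_{\alpha} \nu^{\vartriangle_{\alpha} k}$, Chapman--Kolmogorov together with two applications of \refs{conv} give
\[
P_{k+1}(x, A) \;=\; \int_0^{\infty} P_k(y,A)\, (\delta_x \vartriangle_{\alpha} \nu)(dy) \;=\; \bigl((\delta_x \vartriangle_{\alpha} \nu) \vartriangle_{\alpha} \nu^{\vartriangle_{\alpha} k}\bigr)(A),
\]
and associativity of $\vartriangle_{\alpha}$ collapses this to $(\delta_x \vartriangle_{\alpha} \nu^{\vartriangle_{\alpha}(k+1)})(A)$, as required.

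The main technical point is this last step: it leans on the associativity and bilinearity of $\vartriangle_{\alpha}$ on $\mathcal{P}_+$, which are standard features of Urbanik generalized convolutions (\cite{BJMR}) but are not restated in this excerpt. Everything else is a direct conditioning computation together with the integral representation \refs{conv}.
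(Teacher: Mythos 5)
Your proposal is correct and follows essentially the same route as the paper's proof: identify the one-step conditional law as $\delta_x \vartriangle_{\alpha} \delta_y$ by conditioning on $Y_{n+1}$ and using the independence of $(\xi_n,\theta_{n+1})$, integrate against $\nu$ to get $P_1(x,\cdot)=\delta_x\vartriangle_{\alpha}\nu$, and then induct via Chapman--Kolmogorov. The only differences are cosmetic: you compose the kernels in the opposite order in the induction step, you spell out the Markov property via independence of the innovations from the past $\sigma$-algebra where the paper merely asserts it, and you explicitly flag the reliance on associativity and bilinearity of $\vartriangle_{\alpha}$, which the paper also uses but attributes only to \refs{conv}.
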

The proof of Proposition \ref{kernel:prop} is presented in Section \ref{kernel:prop:proof}.

\section{Finite dimensional distributions} \label{sec:fdd}
In this section we study the finite dimensional distributions of the process $\{X_n\}$.
We start with a proposition
that describes the one-dimensional distributions of
$\{X_n\}$ and their relationships with the Williamson transform and truncated $\alpha$-moment.

\begin{Proposition}\label{prop:Fn}
Let $\{X_n : n \in \mathbb{N}_0\}$ be a Kendall random walk with parameter $\alpha>0$
and unit step distribution $\nu \in\mathcal{P}_+$ with cdf $F$.
Then
\begin{itemize}
\item[{\rm (i)}]
for any $t \ge 0$ we have
\[
G(t) = \alpha t^{-\alpha}  \int\limits_0^t x^{\alpha-1} F(x) dx,
\]
and
$F(t) =  G(t)  + \frac{t}{\alpha} G'(t)$.
\item[{\rm (ii)}]
    for any $t \ge 0, n \ge 1$ we have
\begin{eqnarray*}
F_n(t) =  G(t)^{n-1} \bigl[n t^{-\alpha} H(t) + G(t) \bigr].
\end{eqnarray*}
\end{itemize}
\end{Proposition}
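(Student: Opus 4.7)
My plan is to prove (i) first and then bootstrap (ii) from it via the multiplicativity of the Williamson transform.

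For (i), I would start directly from the definition
\[
G(t) = \int_0^\infty \left(1-\left(\frac{x}{t}\right)^\alpha\right)_+ \nu(dx)
      = F(t) - t^{-\alpha}\int_0^t x^\alpha F(dx),
\]
using that the integrand vanishes for $x>t$. The next step is an integration by parts on the Stieltjes integral:
\[
\int_0^t x^\alpha F(dx) = t^\alpha F(t) - \alpha\int_0^t x^{\alpha-1} F(x)\, dx,
\]
which, substituted back, yields the first identity $G(t) = \alpha t^{-\alpha}\int_0^t x^{\alpha-1}F(x)\, dx$. Differentiating this expression (writing $G(t) = \alpha t^{-\alpha} I(t)$ with $I(t):=\int_0^t x^{\alpha-1}F(x)\,dx$) gives $G'(t) = -\alpha t^{-1} G(t) + \alpha t^{-1} F(t)$ at points of continuity of $F$, which rearranges to $F(t) = G(t) + \frac{t}{\alpha} G'(t)$.

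For (ii), the key observation is that part (i) applies verbatim to the measure $\nu^{\vartriangle_\alpha n}$ with cdf $F_n$ and Williamson transform $G_n$, giving
\[
F_n(t) = G_n(t) + \frac{t}{\alpha} G_n'(t).
\]
Now I would invoke Remark \ref{will:prod}, i.e.\ $G_n(t)=G(t)^n$, and differentiate to get $G_n'(t) = n G(t)^{n-1} G'(t)$, so that
\[
F_n(t) = G(t)^n + n G(t)^{n-1}\cdot\frac{t}{\alpha}G'(t).
\]
Finally, I would use part (i) again in the form $\frac{t}{\alpha}G'(t) = F(t) - G(t)$, combined with the identity $F(t) - G(t) = t^{-\alpha}\int_0^t x^\alpha F(dx) = t^{-\alpha} H(t)$ already derived at the beginning, to conclude
\[
F_n(t) = G(t)^{n-1}\bigl[G(t) + n t^{-\alpha} H(t)\bigr].
\]

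The calculations are essentially routine manipulations with Stieltjes integrals and the fundamental theorem of calculus; the only mild technical point is justifying the differentiation $F(t) = G(t) + \frac{t}{\alpha}G'(t)$ at $t$ where $F$ may be discontinuous. Since the identity involves $F(t)$ explicitly, the cleanest route is to note that the integrand $x^{\alpha-1}F(x)$ is bounded and monotone nondecreasing, so $I'(t) = t^{\alpha-1}F(t)$ holds at every point of continuity of $F$, which is all but countably many $t$; the identity in (ii) then extends to all $t\ge 0$ by right-continuity of both sides, so this does not present a real obstacle.
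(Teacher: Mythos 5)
Your proposal is correct and follows essentially the same route as the paper: both derive $G(t)=F(t)-t^{-\alpha}\int_0^t x^{\alpha}\,\nu(dx)=\alpha t^{-\alpha}\int_0^t x^{\alpha-1}F(x)\,dx$ by integration by parts, obtain (i) by differentiation, and get (ii) by applying the same identity to $\nu^{\vartriangle_\alpha n}$ together with $G_n=G^n$ and differentiating. Your extra remark on handling points of discontinuity of $F$ is a welcome refinement that the paper itself glosses over, but it does not change the argument.
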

\begin{proof}
First, observe that
\begin{equation}	\label{F:eq}
		G(t)
	=
		F(t) - t^{-\alpha} \int\limits_0^t x^{\alpha} \nu (dx)
	=
		\alpha t^{-\alpha}  \int\limits_0^t x^{\alpha-1} F(x) dx,
\end{equation}
where the last equation follows from integration by parts. In order to complete the proof of (i) it suffices to
take derivatives on both sides of the above equation.

In an analogous way we obtain that
\begin{equation} \label{Hn:eq}
		(G(t))^n = \alpha t^{-\alpha} \int_0^t x^{\alpha - 1} F_n(x) dx.
\end{equation}
In order to complete the proof it is sufficient to take derivatives on both sides of equation \refs{Hn:eq} and apply (i).
\end{proof}

The next two lemmas give characterizations of the transition probabilities of the process $\{X_n\}$ and play an important role in the analysis
 of its finite-dimensional distributions.
\begin{Lemma}\label{lem:transit}
Let $\{X_n : n \in \mathbb{N}_0\}$ be a Kendall random walk with parameter $\alpha>0$ and unit step distribution $\nu \in\mathcal{P}_+$.
For all $k, n \in \mathbb{N}, x, y, t \ge 0$ we have
\begin{itemize}
\item[{\rm (i)}]
\begin{eqnarray} \label{kernel:W}
P_n (x, ((0,t]))
& = &  \nonumber
	\label{int_conv1} \left( G(t)^n + \frac{n}{t^{\alpha}} H(t) G(t)^{n-1} \Psi\left(\frac{x}{t}\right)\right) \mathbf{1}_{\{x < t \}}
\end{eqnarray}
\item[{\rm (ii)}]
\begin{eqnarray} \label{inte1}
     \int\limits_0^t w^{\alpha} P_n(x,dw)
  = \nonumber
      \left( x^{\alpha} G(t)^n + nG(t)^{n-1} H(t) \Psi\left(\frac{x}{t}\right) \right) \mathbf{1}_{\{x < t \}}.
\end{eqnarray}
\end{itemize}
\end{Lemma}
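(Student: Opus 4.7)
The plan is to reduce both identities to a direct computation on point masses via the integral representation of the Kendall convolution \refs{conv}, and then to integrate over $\nu^{\vartriangle_\alpha n}$ using the moment identities implicit in Proposition \ref{prop:Fn}.

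First, invoking Proposition \ref{kernel:prop} I would write
\[
P_n(x, A) = \int_0^\infty (\delta_x \vartriangle_\alpha \delta_y)(A)\, \nu^{\vartriangle_\alpha n}(dy),
\]
and evaluate the inner kernel via Definition \ref{Kendal:def}. Since $\delta_x \vartriangle_\alpha \delta_y$ is supported on $[M,\infty)$ with $M = \max(x,y)$, the kernel contributes $0$ to $(0,t]$ whenever $M \ge t$; in particular this forces the global factor $\mathbf{1}_{\{x < t\}}$. For $M < t$, a short computation with $T_M\pi_{2\alpha}((0,t]) = 1 - (M/t)^{2\alpha}$ and $T_M\delta_1((0,t]) = 1$ gives
\[
(\delta_x \vartriangle_\alpha \delta_y)\bigl((0,t]\bigr) = 1 - \frac{(xy)^\alpha}{t^{2\alpha}},
\]
and, by an analogous integration of $w^\alpha$ against the Pareto part,
\[
\int_0^t w^\alpha (\delta_x \vartriangle_\alpha \delta_y)(dw) = x^\alpha + y^\alpha - \frac{2(xy)^\alpha}{t^\alpha}.
\]

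Next I would integrate these two elementary expressions against $\nu^{\vartriangle_\alpha n}(dy)$ on $(0,t)$ (the remainder of the integration region contributes nothing by support), producing
\[
P_n(x,(0,t]) = \mathbf{1}_{\{x<t\}}\Bigl[F_n(t) - \tfrac{x^\alpha}{t^{2\alpha}} J_n(t)\Bigr], \qquad
\int_0^t w^\alpha P_n(x,dw) = \mathbf{1}_{\{x<t\}}\Bigl[x^\alpha F_n(t) + J_n(t) - \tfrac{2x^\alpha}{t^\alpha} J_n(t)\Bigr],
\]
where $J_n(t) := \int_0^t y^\alpha \nu^{\vartriangle_\alpha n}(dy)$. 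To finish, I need to identify $F_n(t)$ and $J_n(t)$ with the right-hand sides stated in (i) and (ii). Proposition \ref{prop:Fn}(ii) supplies $F_n(t) = G(t)^n + n t^{-\alpha} H(t) G(t)^{n-1}$, and applying the identity from \refs{F:eq} to $\nu^{\vartriangle_\alpha n}$ (whose Williamson transform is $G(t)^n$ by Remark \ref{will:prod}) yields $J_n(t) = t^\alpha(F_n(t) - G(t)^n) = n H(t) G(t)^{n-1}$.

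Substituting and using $\Psi(x/t) + (x/t)^\alpha = 1$ on $\{x<t\}$ then collapses the algebra: in (i), the $G(t)^n$-terms recombine into $G(t)^n \cdot 1$, leaving the advertised $nt^{-\alpha}H(t)G(t)^{n-1}\Psi(x/t)$ correction; in (ii), the bracket $[(x/t)^\alpha + 1 - 2(x/t)^\alpha]$ becomes $\Psi(x/t)$, giving exactly $x^\alpha G(t)^n + n G(t)^{n-1} H(t)\Psi(x/t)$. I expect the only real obstacle to be the careful bookkeeping of the indicator $\mathbf{1}_{\{x<t\}}$ and the split of the $y$-integration at $y = t$ (which is harmless since $\nu^{\vartriangle_\alpha n}$ has an atomless Pareto part off the diagonal); everything else is mechanical once the two moment identities are in place.
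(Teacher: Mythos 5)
Your proposal is correct and follows essentially the same route as the paper: compute $(\delta_x\vartriangle_\alpha\delta_y)((0,t])$ and $\int_0^t w^\alpha(\delta_x\vartriangle_\alpha\delta_y)(dw)$ explicitly, integrate against $\nu^{\vartriangle_\alpha n}(dy)$, and then invoke Proposition \ref{prop:Fn}(ii) together with the identity $\int_0^t y^\alpha\,\nu^{\vartriangle_\alpha n}(dy)=t^\alpha\bigl(F_n(t)-G(t)^n\bigr)=nH(t)G(t)^{n-1}$ coming from \refs{F:eq} and Remark \ref{will:prod}. The only cosmetic differences are that the paper obtains the $w^\alpha$-integral by integration by parts and recasts the point-mass kernel in terms of $\Psi$ before integrating, whereas you keep the raw algebraic form and recombine into $\Psi(x/t)$ at the end.
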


The proof of Lemma \ref{lem:transit} is presented in Section \ref{lem:transit:proof}.	\\

The following lemma is the main tool in finding the finite-dimensional distributions of $\{X_n\}$.
In order to formulate the result it is convenient to introduce the notation
\[
    \mathcal{A}_k := \{0,1\}^k \backslash \{(0,0,...,0) \},\ \ \ \ \
    {\rm for \ any}\  k\in \mathbb{N}.
\]
Additionally, for any $(\epsilon_1,...,\epsilon_n) \in \mathcal{A}_n$
we denote
\[
\tilde{\epsilon}_1 = \min \left\{i \in \{1,..., k\} : \epsilon_i = 1 \right\},  \; \dots ,
\tilde{\epsilon}_m := \min \left\{i>\tilde{\epsilon}_{m-1} : \epsilon_i = 1 \right\}, \ \ \ m=1,2, \dots , s, \ \ \ s := \sum_{i = 1}^k \epsilon_i.
\]

\begin{Lemma}\label{thm:a}
Let $\{X_n : n \in \mathbb{N}_0\}$ be a Kendall random walk with parameter $\alpha>0$ and unit step distribution $\nu \in\mathcal{P}_+$.
Then for any $0 = n_0 \le n_1\le n_2\cdots \le n_k$,
where $n_j \in \mathbb{N}$ for all $j \in \mathbb{N}$ and $0 \le y_0 \le x_1 \le x_2 \le \cdots \le x_k \le x_{k+1}$ we have
\begin{eqnarray*}
\lefteqn{
		\int\limits_{0}^{x_1}\int\limits_{0}^{x_2} \cdots \int\limits_{0}^{x_k}
		\Psi\left(\frac{y_k}{x_{k+1}}\right) P_{n_k-n_{k-1}}(y_{k-1}, dy_k) P_{n_{k-1}-n_{k-2}}(y_{k-2}, dy_{k-1})
		\cdots P_{n_1}(y_0, dy_1) }\\
	&=&
		\sum\limits_{({\epsilon}_1,{\epsilon}_2,\cdots,{\epsilon}_k) \in \mathcal{A}_k}
		\Psi \left( \frac{y_0}{x_{\tilde{\epsilon}_1}} \right)\Psi \left( \frac{x_{\tilde{\epsilon}_s}}{x_{k+1}} \right)
		\prod_{i=1}^{s-1} \Psi\left(\frac{x_{\tilde{\epsilon}_i}}{x_{\tilde{\epsilon}_{i+1}}}\right)
		\prod_{j=1}^k (G(x_j))^{n_j - n_{j-1} - \epsilon_j} \left(\frac{(n_j - n_{j-1})H(x_j)}{x_j^\alpha}\right)^{\epsilon_j} \\
  &+& \Psi \left( \frac{y_0}{x_{k+1}} \right) \prod_{j=1}^k (G(x_j))^{n_j - n_{j-1}},
\end{eqnarray*}
where
\[
		\prod_{i=1}^{s-1} \Psi\left(\frac{x_{\tilde{\epsilon}_i}}{x_{\tilde{\epsilon}_{i+1}}}\right) = 1 \ \ \ {\rm for} \ \ \ s = 1.
\]
\end{Lemma}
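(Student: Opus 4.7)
The plan is to proceed by induction on $k$, with a single reduction identity obtained from Lemma \ref{lem:transit} doing all the work.

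The first step is to establish the following base identity: for $0 \le y \le t \le s$ and $n \ge 1$,
\[
\int_0^t \Psi\!\left(\frac{w}{s}\right) P_n(y, dw)
=
G(t)^n \Psi\!\left(\frac{y}{s}\right) + \frac{n H(t)}{t^\alpha}\, G(t)^{n-1}\, \Psi\!\left(\frac{y}{t}\right) \Psi\!\left(\frac{t}{s}\right).
\]
Since $w \le t \le s$ throughout the range of integration, $\Psi(w/s) = 1 - (w/s)^\alpha$, so the integral equals $P_n(y,(0,t]) - s^{-\alpha}\int_0^t w^\alpha P_n(y,dw)$. Substituting parts (i) and (ii) of Lemma \ref{lem:transit} and using $\frac{1}{t^\alpha} - \frac{1}{s^\alpha} = \frac{1}{t^\alpha}\Psi(t/s)$ collects the cross terms into the displayed form. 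Setting $y = y_0$, $t = x_1$, $s = x_2$, $n = n_1$ is precisely the lemma for $k = 1$.

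For the inductive step, I would peel off the innermost integration $\int_0^{x_k}\Psi(y_k/x_{k+1})\,P_{n_k-n_{k-1}}(y_{k-1}, dy_k)$ by applying the base identity with $y = y_{k-1}$, $t = x_k$, $s = x_{k+1}$, $n = n_k - n_{k-1}$. This splits the $k$-fold integral into two $(k-1)$-fold integrals. The first carries $\Psi(y_{k-1}/x_{k+1})$ with the constant coefficient $G(x_k)^{n_k - n_{k-1}}$; the second carries $\Psi(y_{k-1}/x_k)$ with the constant coefficient $\frac{(n_k - n_{k-1}) H(x_k)}{x_k^\alpha}\, G(x_k)^{n_k - n_{k-1} - 1}\, \Psi(x_k/x_{k+1})$. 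Each of these is now in the form required by the inductive hypothesis (the second with the "terminal endpoint" relabelled from $x_{k+1}$ to $x_k$), so both expand into sums indexed by $\mathcal{A}_{k-1}$ together with their all-zero extra terms.

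The final step is to assemble the four resulting expressions into the sum over $\mathcal{A}_k$, partitioning according to whether $\epsilon_k = 0$ or $\epsilon_k = 1$. The first piece produces all strings with $\epsilon_k = 0$: the $\mathcal{A}_{k-1}$-sum matches the strings whose first $k-1$ coordinates are not all zero, and the all-zero extra term from the inductive hypothesis, multiplied by $G(x_k)^{n_k - n_{k-1}}$, becomes the new all-zero extra term of the $k$-case. The second piece produces all strings with $\epsilon_k = 1$: its prefactor supplies exactly the $j = k$ factor in $\prod_{j=1}^k G(x_j)^{n_j-n_{j-1}-\epsilon_j}\bigl((n_j-n_{j-1})H(x_j)/x_j^\alpha\bigr)^{\epsilon_j}$, and the dangling $\Psi(x_k/x_{k+1})$ becomes the closing factor $\Psi(x_{\tilde\epsilon_s}/x_{k+1})$ since $\tilde\epsilon_s = k$ whenever $\epsilon_k = 1$; the all-zero extra term of the inductive hypothesis, multiplied by the same prefactor, supplies the string $(0,\ldots,0,1)$. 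The main obstacle is the combinatorial bookkeeping of the $\tilde\epsilon_i$ under appending $\epsilon_k = 1$: what was the closing factor $\Psi(x_{\tilde\epsilon_s^{(k-1)}}/x_k)$ in the $(k-1)$-case becomes a middle factor $\Psi(x_{\tilde\epsilon_{s-1}^{(k)}}/x_{\tilde\epsilon_s^{(k)}})$ in the $k$-case, once one observes $\tilde\epsilon_i^{(k)} = \tilde\epsilon_i^{(k-1)}$ for $i < s^{(k)}$ and $\tilde\epsilon_{s^{(k)}}^{(k)} = k$. This relabelling is straightforward but is the one place where care is required.
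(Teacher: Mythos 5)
Your proof is correct and follows essentially the same route as the paper: induction on $k$, with the two-term reduction identity $\int_0^t\Psi(w/s)P_n(y,dw)=G(t)^n\Psi(y/s)+\frac{nH(t)}{t^\alpha}G(t)^{n-1}\Psi(y/t)\Psi(t/s)$, obtained from Lemma \ref{lem:transit} exactly as you describe, doing all the work. The only difference is which end of the chain you peel: you remove the innermost integral and partition the index set by $\epsilon_k$, while the paper removes the outermost integral over $y_1$ and splits $\mathcal{A}_{k+1}$ by $\epsilon_1$ into the four families $\mathcal{A}_{k+1}^0$, $\mathcal{A}_{k+1}^1$, $\{(1,0,\dots,0)\}$ and $\{(0,\dots,0)\}$, with the same relabelling bookkeeping for the $\tilde{\epsilon}_i$ that you flag.
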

The proof of Lemma \ref{thm:a} is presented in Section \ref{thm:a:proof}.\\

Now, we are able to derive a general formula for the finite-dimentional distributions of the process $\{X_n\}$.
\begin{Theorem}\label{thm:b}
Let $\{X_n : n \in \mathbb{N}_0\}$ be a Kendall random walk with parameter $\alpha>0$ and unit step distribution $\nu \in\mathcal{P}_+$. Then for any $0 =: n_0 \le n_1\le n_2\cdots \le n_k$,
where $n_j \in \mathbb{N}$ for all $j \in \mathbb{N}$ and $0 \le x_1 \le x_2 \le \cdots \le x_k$ we have
\begin{eqnarray*}
		\lefteqn{ \pr(X_{n_k} \le x_k, X_{n_{k-1}} \le x_{k-1}, \cdots, X_{n_1} \le x_1)}\\
	&=&
		\sum\limits_{({\epsilon}_1,{\epsilon}_2,\cdots,{\epsilon}_k) \in \{0,1\}^k}
		\prod_{i=1}^{s-1} \Psi\left(\frac{x_{\tilde{\epsilon}_i}}{x_{\tilde{\epsilon}_{i+1}}}\right)
		\prod_{j=1}^k (G(x_j))^{n_j - n_{j-1} - \epsilon_j} \left(\frac{(n_j - n_{j-1})H(x_j)}{x_j^\alpha}\right)^{\epsilon_j},
\end{eqnarray*}
where
\[
		\prod_{i=1}^{s-1} \Psi\left(\frac{x_{\tilde{\epsilon}_i}}{x_{\tilde{\epsilon}_{i+1}}}\right) = 1 \ \ \ {\rm for} \ \ \ s \in \{0,1\}.
\]
\end{Theorem}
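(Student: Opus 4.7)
The plan is to realize the finite-dimensional probability as an iterated integral against the transition kernels of Proposition~\ref{kernel:prop}, and then to read off the result from Lemma~\ref{thm:a} by a suitable limit passage.

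First, I would observe that $\delta_0$ acts as the neutral element of the Kendall convolution: putting $x=0$ in Definition~\ref{Kendal:def} gives $\varrho=0$, so $\delta_0 \vartriangle_\alpha \delta_y = T_y(\delta_1) = \delta_y$ for every $y \ge 0$, and extending via \refs{conv} yields $\delta_0 \vartriangle_\alpha \nu^{\vartriangle_\alpha k} = \nu^{\vartriangle_\alpha k}$, i.e.\ $P_k(0,\cdot) = \nu^{\vartriangle_\alpha k}$. Since $X_1 = Y_1 \sim \nu$ and the chain is driven by the Kendall kernel thereafter, the marginal law of $X_{n_1}$ equals $\nu^{\vartriangle_\alpha n_1} = P_{n_1}(0,\cdot)$. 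Combined with the homogeneous Markov property, this lets me write, with the conventions $y_0 := 0$ and $n_0 := 0$,
\[
\pr(X_{n_k} \le x_k, \ldots, X_{n_1} \le x_1) \;=\; \int_{0}^{x_1}\!\!\cdots\!\int_{0}^{x_k} \prod_{j=1}^{k} P_{n_j - n_{j-1}}(y_{j-1},\, dy_j).
\]

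Next, for an auxiliary parameter $x_{k+1} \ge x_k$ I would insert the bounded factor $\Psi(y_k/x_{k+1})$ inside the integrand. Lemma~\ref{thm:a} now applies directly (its hypothesis $0 = y_0 \le x_1 \le \cdots \le x_{k+1}$ is met) and produces the desired algebraic expansion, modulated by boundary factors $\Psi(0/x_{\tilde\epsilon_1})$, $\Psi(x_{\tilde\epsilon_s}/x_{k+1})$ and $\Psi(0/x_{k+1})$. Letting $x_{k+1} \to \infty$, the left-hand side tends by monotone convergence to the iterated integral above; on the right-hand side, $\Psi(x_{\tilde\epsilon_s}/x_{k+1}) \to 1$ and $\Psi(0/x_{k+1}) = 1$, while $\Psi(0/x_{\tilde\epsilon_1}) = 1$ by the very definition \refs{Psi}. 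Bookkeeping then completes the proof: the surviving sum runs over $\mathcal{A}_k$, and the leftover term $\prod_j G(x_j)^{n_j - n_{j-1}}$ matches precisely the summand $(\epsilon_1,\ldots,\epsilon_k) = (0,\ldots,0)$ of the theorem (for which both the $\Psi$-product and each of the $H$-factors collapse to $1$), so the two contributions combine into the full sum over $\{0,1\}^k$.

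The main obstacle, in my view, is the first step: noticing that the non-standard initial transition $X_0 = 1 \mapsto X_1 = Y_1$---which bypasses the Kendall kernel---can be absorbed uniformly into the framework of Lemma~\ref{thm:a} by encoding the initial state as $\delta_0$. The vanishing boundary factor $\Psi(0/x_{\tilde\epsilon_1}) = 1$ this produces is exactly what erases the spurious $\Psi$ from the resulting sum and extends the index set from $\mathcal{A}_k$ to $\{0,1\}^k$. Beyond this observation, everything is a routine limit passage.
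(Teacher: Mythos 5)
Your proposal is correct and follows essentially the same route as the paper: both express the joint probability as an iterated integral of the transition kernels started from $y_0=0$, then apply Lemma \ref{thm:a} and let $x_{k+1}\to\infty$ so that the boundary factors $\Psi(0/x_{\tilde\epsilon_1})$ and $\Psi(\cdot/x_{k+1})$ become $1$ and the sum over $\mathcal{A}_k$ together with the leftover term reassembles into the sum over $\{0,1\}^k$. Your added justification that $\delta_0$ is neutral for $\vartriangle_\alpha$, so that $P_{n_1}(0,\cdot)=\nu^{\vartriangle_\alpha n_1}$ is indeed the law of $X_{n_1}$, is a detail the paper leaves implicit but is entirely consistent with its argument.
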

\noindent {\bf Proof.}
First, observe that
\begin{eqnarray*}
		\lefteqn{ \pr(X_{n_k} \le x_k, X_{n_{k-1}} \le x_{k-1}, \cdots, X_{n_1} \le x_1)}\\
        & = &
		\int\limits_{0}^{x_1}\int\limits_{0}^{x_2} \cdots \int\limits_{0}^{x_k} P_{n_k-n_{k-1}}(y_{k-1}, dy_k) P_{n_{k-1}-n_{k-2}}(y_{k-2}, dy_{k-1}) \cdots P_{n_1}(0, dy_1).
\end{eqnarray*}
Moreover, by the definition of $\Psi(\cdot)$, for any $a > 0$, we have
\[
    \lim_{x_{k+1} \to \infty} \Psi\left(\frac{a}{x_{k+1}}\right)
=
    \Psi\left(0\right)
=
    1
.
\]
Now in order to complete the proof it suffices to apply
Lemma \ref{thm:a} with $y_0=0$ and $x_{k+1} \to \infty$.
\qed\\

Finally, we present the cumulative distribution functions and characterizations of the finite-dimensional distributions
of the process $\{X_n\}$ for the most interesting examples of unit step distributions $\nu$.
Since, by Theorem \ref{thm:b}, finite-dimentional distributions of $\{X_n\}$ are uniquely determined by
the Williamson transform $G(\cdot)$
and the truncated $\alpha$-moment $H(\cdot)$ of the step distribution $\nu$, then this two characteristics are presented for each examples of the analyzed cases. Additionally in each example we derive the cdf of $\nu^{\vartriangle_{\alpha}n}$ that is the one-dimentional distribution of the process $\{X_n\}$.

We start with a basic case of a point-mass distribution $\nu$.

\begin{Example}\label{ex:3}
Let $\nu = \delta_1$.
Then the Williamson transform and truncated $\alpha$-moment of measure $\nu$ are given by
\[
G(x) = \left(1-\frac{1}{x^{\alpha}}\right)_+ \ \ \  \hbox{and} \ \ \ H(x)= \pmb{1}_{[1,\infty)}(x),
\]
respectively.
Hence, by Proposition \ref{prop:Fn} (ii), for any $n = 2,3,...$, we have
\[
F_n(x) = \left(1+\frac{n-1}{x^{\alpha}}\right) \left(1-\frac{1}{x^{\alpha}}\right)^{n-1} \pmb{1}_{[1,\infty)}(x).
\]

\end{Example}

In the next example we consider a linear combination of $\delta_1$ and the Pareto distribution that plays a crucial role in construction of Kenadall convolution.

\begin{Example}\label{ex:4}
Let $\nu = p \delta_1 + (1-p) \pi_{p}$, where $p\in(0,1]$ and $\pi_{p}$ is a Pareto distribution with the pdf \refs{pareto:pdf} with $2\alpha = p$.
Then Williamson transform and truncated $\alpha$-moment of measure $\nu$ are given by
\[
G(x) = \left\{ \begin{array}{l} \left(1-\frac{\alpha(1-p)}{(\alpha-p)}x^{-p} + \frac{p(1-\alpha)}{(\alpha-p)}x^{-\alpha}\right)\pmb{1}_{[1,\infty)}(x) \quad \hbox{if} \; p \neq \alpha,
\\[2mm]
\left(1 - (1-p) x^{-p} -  px^{-\alpha} +p (1-p)  x^{-\alpha} \log(x) \right)\pmb{1}_{[1,\infty)}(x) \quad \hbox{if} \; p = \alpha
 \end{array} \right.
\]
and
\[
H(x)= \left\{ \begin{array}{l}
 \left( \frac{p(1-\alpha)}{(p-\alpha)}+ \frac{p(1-p)}{(\alpha-p)} x^{\alpha-p} \right) \pmb{1}_{[1,\infty)}(x) \quad \hbox{if} \; p \neq \alpha,
\\[2mm]
p + p(1-p) \log(x) \quad \hbox{if} \; p = \alpha,
 \end{array} \right.
\]
respectively. Hence, by Proposition \ref{prop:Fn} (ii), for any $n = 2,3,...$, we have
\begin{eqnarray*}
F_n(x)
& = &  \left[1 -         \frac{\alpha(1-p)}{\alpha-p}x^{-p} + \frac{p(1-\alpha)}{\alpha-p}x^{-\alpha}\right]^{n-1}\\
&\cdot&
\left[1+\frac{(1-p)(np-\alpha)}{\alpha-p}x^{-p}-\frac{p(1-\alpha)(n-1)}{\alpha-p}x^{-\alpha}\right] \pmb{1}_{[1,\infty)}(x)
\end{eqnarray*}
for $p \neq \alpha$, and
\begin{eqnarray*}
F_n(x)
& = &  \left[1 - (1-p) x^{-p} -  px^{-\alpha} +p (1-p)  x^{-\alpha} \log(x) \right]^{n-1}\\
&\cdot&
\left[1 + (n-1) p x^{-\alpha} - (1-p) x^{-p} + p (1-p) (n+1)  x^{-\alpha} \log(x)\right] \pmb{1}_{[1,\infty)}(x)
\end{eqnarray*}
for $p = \alpha$.
\end{Example}
In the next example we consider the distribution $\nu$ with the lack of memory property for the Kendall convolution.
We refer to \cite{Poisson} for a general result about the existence of measures with the lack of memory property for the so called
monotonic generalized convolutions.

\begin{Example}\label{ex:5}
Let $\nu$ be a probability measure with the cdf $F(x) = 1 - (1-x^{\alpha})_+$, where $\alpha>0$.
Then the Williamson transform and truncated $\alpha$-moment of measure $\nu$ are given by
\[
G(x) = \frac{x^{\alpha}}{2}\pmb{1}_{[0,1)}(x) + \left(1-\frac{1}{2x^{\alpha}}\right) \pmb{1}_{[1,\infty)}(x)\ \ \ {\rm and} \ \ \
H(x) = \frac{x^{2\alpha}}{2}\pmb{1}_{[0,1)}(x) + \frac{1}{2} \pmb{1}_{[1,\infty)}(x),
\]
respectively.
Hence, by Proposition \ref{prop:Fn} (ii), for any $n = 2,3,...$, we have
\[
F_n(x) = \left\{ \begin{array}{lcl}
     \frac{1}{2}\frac{ n+1}{2^n}\, x^{\alpha n} & \hbox{ for } &  x \in [0,1]; \\[1mm]
     \frac{1}{2}\left(1 - \frac{1}{2x^{\alpha}} \right)^{n-1} \left( 1 + \frac{ n-1}{2x^{\alpha}} \right) & \hbox{ for } &  x > 1.
\end{array} \right.
\]

\end{Example}
In the next example we consider a unit step distribution, which is a stable probability measure for the Kendall random walk with unit step distribution $\nu$ (see Section \ref{sec:lim}, Theorem \ref{thm:CLT}).
\begin{Example}
Let $\rho_{\nu,\alpha}, \alpha > 0$ be a probability measure with cdf
\[
	F(x)
=
	\left( 1 + m_{\nu}^{(\alpha)} x^{-\alpha} \right) e^{-m_{\nu}^{(\alpha)} x^{-\alpha}} \pmb{1}_{(0,\infty)}(x),
\]
where $\alpha > 0$ and $m_{\nu}^{(\alpha)}>0$ is a parameter. Then the Williamson transform and truncated $\alpha$-moment of measure $\rho_{\nu,\alpha}$ are given by
\[
G(x) = \exp\{-m_{\nu}^{(\alpha)} x^{-\alpha}\} \pmb{1}_{(0,\infty)}(x) \ \ \
{\rm and} \ \ \
H(x) = m_{\nu}^{(\alpha)} G(x),
\]
respectively. Hence, by Proposition \ref{prop:Fn} (ii), for any $n = 2,3,...$, we have
\[
    F_n(x)
=
    \left( 1 + n m_{\nu}^{(\alpha)}  x^{-\alpha} \right) \exp\{- n m_{\nu}^{(\alpha)} x^{-\alpha}\} \pmb{1}_{(0,\infty)}(x).
\]
\end{Example}
\begin{Example}\label{ex:2}
Let $\nu=U(0,1)$ be the uniform distribution with the density $\nu(dx) = \pmb{1}_{(0,1)}(x) dx$.
Then the Williamson transform and truncated $\alpha$-moment of measure $\nu$ are given by
\[
G(x) = (x \wedge 1) - \frac{(x \wedge 1)^{\alpha+1}}{(\alpha+1)x^{\alpha}}, \ \ \
{\rm and} \ \ \
H(x) = \frac{(x \wedge 1)^{\alpha+1}}{(\alpha+1)},
\]
respectively. Hence, by Proposition \ref{prop:Fn} (ii), for any $n = 2,3,...$, we have
\begin{eqnarray*}
\lefteqn{
F_n(x) = \left(\frac{\alpha}{\alpha+1}\right)^n\left(1+\frac{n}{\alpha}\right)x^n \pmb{1}_{[0,1)}(x)}\\
& & + \left(1-\frac{1}{(\alpha+1)x^{\alpha}}\right)^{n-1}\left(1+\frac{n-1}{( \alpha+1)x^{\alpha}}\right)\pmb{1}_{[1,\infty)}(x).
\end{eqnarray*}

\end{Example}
\begin{Example}\label{ex:6}
Let $\nu = \gamma_{a,b},\; a,b>0$, be the Gamma distribution with the pdf
\[
\gamma_{a,b}(dx)=\frac{b^a}{\Gamma(a)} x^{a-1} e^{-bx} \pmb{1}_{(0,\infty)}(x) dx.
\]
Then the Williamson transform and truncated $\alpha$-moment of measure $\nu$ are given by
\[
G(x) = \gamma_{a,b}(0,x] - \frac{\Gamma(a+\alpha)}{\Gamma(a)} b^{-\alpha} x^{-\alpha}  \gamma_{a+\alpha,b}(0,x],\ \ \ {\rm and}\ \ \  H(x) = \frac{\Gamma(a+\alpha)}{\Gamma(a)} b^{-\alpha} \gamma_{a+\alpha,b}(0,x],
\]
where $\gamma_{a,b}(0,x] = \frac{b^a}{\Gamma(a)} \! \int_0^x t^{s-1} e^{-t} dt$. Hence, by Proposition \ref{prop:Fn} (ii),
for any $n = 2,3,...$, we have
\[
F_n(x) = \left[\gamma_{a,b}(0,x] - \frac{\Gamma(a+\alpha)}{\Gamma(a)} b^{-\alpha} x^{-\alpha}  \gamma_{a+\alpha,b}(0,x]\right]^{n-1} \cdot \left[ \gamma_{a,b}(0,x] + \frac{\Gamma(a+\alpha)}{\Gamma(a)} (n-1)  b^{-\alpha} x^{-\alpha} \gamma_{a+\alpha,b}(0,x] \right].
\]
\end{Example}
\section{Limit theorems} \label{sec:lim}
In this section we investigate limiting behaviors of Kendall random walks and connected continuous time processes.
The analysis is based on inverting the Williamson transform as the given in Proposition \ref{prop:Fn}, (ii). Moreover, as it is shown in Section \ref{sec:def}, the Kendall convolution is strongly related to the Pareto distribution. Hence, regular variation techniques play a crucial role in the analysis of the asymptotic behaviors and limit theorems for the processes studied in this section.

We start with the analysis of asymptotic behavior of the tail distribution of random variables $X_n$.

\begin{Theorem} \label{th.fn.as}
Let $\{X_n : n \in \mathbb{N}_0\}$ be a Kendall random walk with parameter $\alpha>0$ and unit step distribution $\nu \in\mathcal{P}_+$. Then

\[
		\overline{F}_n(x)
	=
		n\overline{F}(x) + \frac{1}{2} n (n-1) (H(x))^2 x^{-2\alpha} (1 + o(1))
\]
as $x \toi$.
\end{Theorem}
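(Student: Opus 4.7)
The plan is to build on the explicit one-dimensional formula from Proposition \ref{prop:Fn}(ii) and perform a Taylor expansion around $x = \infty$. The natural small parameter is $\overline{G}(x) := 1 - G(x)$, and the key identity driving the whole proof is
$$\overline{G}(x) = \overline{F}(x) + x^{-\alpha}H(x),$$
which follows by integration by parts in the representation of $G$ given in \eqref{F:eq}. Because $F$ is a probability distribution on $\R_+$, $G(x)\to 1$, so both summands on the right tend to zero as $x\to\infty$; this is what makes the expansion legitimate.

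I would then start from
$$\overline{F}_n(x) = 1 - G(x)^n - n x^{-\alpha}H(x)\, G(x)^{n-1},$$
insert $G = 1 - \overline{G}$, and use the binomial expansions $G^n = 1 - n\overline{G} + \binom{n}{2}\overline{G}^2 + O(\overline{G}^3)$ and $G^{n-1} = 1 - (n-1)\overline{G} + O(\overline{G}^2)$. The linear contributions telescope into $n(\overline{G} - x^{-\alpha}H) = n\overline{F}(x)$, recovering the leading term. The quadratic contributions can be regrouped as
$$-\tbinom{n}{2}\overline{G}^2 + n(n-1)x^{-\alpha}H\,\overline{G} = \tfrac{n(n-1)}{2}\,\overline{G}\bigl(2x^{-\alpha}H - \overline{G}\bigr) = \tfrac{n(n-1)}{2}\bigl(x^{-\alpha}H + \overline{F}\bigr)\bigl(x^{-\alpha}H - \overline{F}\bigr),$$
and the difference of squares yields $\tfrac{n(n-1)}{2}\bigl[x^{-2\alpha}H(x)^2 - \overline{F}(x)^2\bigr]$. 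Collecting everything,
$$\overline{F}_n(x) = n\overline{F}(x) + \tfrac{n(n-1)}{2}\bigl[x^{-2\alpha}H(x)^2 - \overline{F}(x)^2\bigr] + O\bigl(\overline{G}(x)^3\bigr).$$

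The final step, and the main obstacle, is to absorb the $-\overline{F}(x)^2$ correction and the cubic remainder into the $o(1)$ factor on $x^{-2\alpha}H(x)^2$. For this I would use the crude but effective bound $x^{\alpha}\overline{F}(x) \le \int_x^\infty y^{\alpha}F(dy)$: when $m_{\nu}^{(\alpha)}<\infty$ the right-hand side tends to $0$ while $H(x)\to m_{\nu}^{(\alpha)}>0$, so $\overline{F}(x) = o(x^{-\alpha}H(x))$ immediately, and $\overline{G}(x)^3 = O(x^{-3\alpha}H(x)^3) = o(x^{-2\alpha}H(x)^2)$. The delicate point is the heavy-tailed regime where $m_\nu^{(\alpha)}=\infty$; here $H(x)\to\infty$ and the same ratio must be controlled via a Karamata-type argument on the divergence rate of $H$, which is precisely where the regular-variation machinery emphasised in Section \ref{sec:lim} becomes essential.
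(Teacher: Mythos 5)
Your expansion itself is correct and is essentially the paper's own route: the paper also starts from Proposition \ref{prop:Fn}(ii), writes $F_n(x)=\bigl(F(x)-x^{-\alpha}H(x)\bigr)^{n-1}\bigl(F(x)+(n-1)x^{-\alpha}H(x)\bigr)$ and expands to second order, the only cosmetic difference being that it organizes the expansion in powers of $H(x)/x^{\alpha}$ with $F$ kept exact (so the leading block appears as $1-F(x)^n=n\overline{F}(x)(1+o(1))$), whereas you expand in $\overline{G}=\overline{F}+x^{-\alpha}H$. Your intermediate identity
\[
\overline{F}_n(x)=n\overline{F}(x)+\tfrac{n(n-1)}{2}\bigl[x^{-2\alpha}H(x)^2-\overline{F}(x)^2\bigr]+O\bigl(\overline{G}(x)^3\bigr)
\]
agrees with what the paper derives and is, if anything, sharper.

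The genuine gap is in your last step. In the regime $m_{\nu}^{(\alpha)}=\infty$ with $\overline{F}\in RV_{\theta-\alpha}$, $0<\theta<\alpha$, the Karamata argument you invoke proves the \emph{opposite} of what you need: Karamata's theorem gives $H(x)\sim\frac{\alpha-\theta}{\theta}\,x^{\alpha}\overline{F}(x)$, so $\overline{F}(x)$ and $x^{-\alpha}H(x)$ are of exactly the same order and $\overline{F}(x)^2$ is \emph{not} $o\bigl(x^{-2\alpha}H(x)^2\bigr)$. Hence the strict reading of the displayed formula --- that $\overline{F}_n(x)-n\overline{F}(x)\sim\frac{n(n-1)}{2}x^{-2\alpha}H(x)^2$ --- cannot be reached along your route in that regime, and is in fact false there (for $\theta=\alpha/2$ your bracket $x^{-2\alpha}H^2-\overline{F}^2$ is of strictly smaller order than $x^{-2\alpha}H^2$). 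The statement has to be read the way the paper's proof actually establishes it, namely with a separate multiplicative $1+o(1)$ on the term $n\overline{F}(x)$; this is confirmed by the Corollary that follows, whose part (i) asserts only $\overline{F}_n(x)=n\overline{F}(x)(1+o(1))$ in the regularly varying case. Under that reading your expansion already finishes the proof with no regular variation at all: $\overline{F}(x)^2=o(\overline{F}(x))$ absorbs the offending term into the first factor, and $\overline{G}^3=O\bigl(\overline{F}^3+x^{-3\alpha}H^3\bigr)=o(\overline{F})+o\bigl(x^{-2\alpha}H^2\bigr)$. So you should delete the proposed Karamata step and instead split the error between the two terms of the asymptotic.
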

The proof of Theorem \ref{th.fn.as} is presented in Section \ref{th.fn.as:proof}.\\

The following Corollary is a direct consequence of the Theorem \ref{th.fn.as}.
\begin{Corollary}
Let $\{X_n : n \in \mathbb{N}_0\}$ be a Kendall random walk with parameter $\alpha>0$ and unit step distribution $\nu \in\mathcal{P}_+$. Moreover, let\\
{\rm (i)} $\overline{F}(x)$ be regularly varying with parameter
$\theta - \alpha$ as $x \toi$, where $0 < \theta < \alpha$. Then
\[
		\overline{F}_n(x)
	=
		n\overline{F}(x) (1 + o(1)) \ \ \ \ \ {\rm as} \ x \toi.
\]
{\rm (ii)} $m_\nu^{(\alpha)} < \infty$. Then
\[
		\overline{F}_n(x)
	=
		n\overline{F}(x) + \frac{1}{2} n (n-1) \left(m_\nu^{(\alpha)}\right)^2 x^{-2\alpha} (1 + o(1)) \ \ \ \ \ {\rm as} \ x \toi.
\]
{\rm (iii)} $\overline{F}(x) = o\left(x^{-2\alpha}\right)$ as $x \toi$. Then
\[
		\overline{F}_n(x)
	=
		\frac{1}{2} n (n-1) \left(m_\nu^{(\alpha)}\right)^2 x^{-2\alpha}(1 + o(1)) \ \ \ \ \ {\rm as} \ x \toi.
\]
\end{Corollary}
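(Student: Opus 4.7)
The plan is to derive the asymptotic expansion directly from the one-dimensional formula in Proposition~\ref{prop:Fn}(ii), namely
\[
F_n(t) = G(t)^n + n t^{-\alpha} H(t) G(t)^{n-1},
\]
by performing a careful second-order expansion of the powers of $G(t)$ near $1$. Introduce the shorthand $v(t) := t^{-\alpha} H(t)$, and use Proposition~\ref{prop:Fn}(i), which states $G(t) = F(t) - v(t)$, to set $\eta(t) := 1 - G(t) = \overline{F}(t) + v(t)$. The first step is to verify $\eta(t) \to 0$ as $t \to \infty$: $\overline{F}(t) \to 0$ is immediate, while for $v(t)$ one splits $H(t)$ at an arbitrary level $\varepsilon t$ to obtain $v(t) \le \varepsilon^\alpha + \overline{F}(\varepsilon t)$, then lets $\varepsilon \to 0$.

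Next, apply the binomial expansion $(1-\eta)^k = 1 - k\eta + \binom{k}{2}\eta^2 + O(\eta^3)$ for $k = n, n-1$ and substitute into the expression for $F_n(t)$. Since $v = O(\eta)$, the contribution of the cubic remainders in $n v G^{n-1}$ is absorbed into $O(\eta^3)$. The first-order terms telescope cleanly, $-n\eta + n v = -n\overline{F}$. The key algebraic identity for the second-order terms is
\[
\binom{n}{2}\eta^2 - n(n-1) v \eta
= \frac{n(n-1)}{2}\eta(\eta - 2v)
= \frac{n(n-1)}{2}(\overline{F}+v)(\overline{F}-v)
= \frac{n(n-1)}{2}\bigl(\overline{F}^2 - v^2\bigr),
\]
in which the cross-terms in $\overline{F} v$ cancel exactly. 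Collecting everything yields the intermediate identity
\[
\overline{F}_n(t) = n\overline{F}(t) + \frac{n(n-1)}{2}\bigl(v(t)^2 - \overline{F}(t)^2\bigr) + O(\eta(t)^3).
\]

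Finally, to match the stated form I would show that the surplus $-\tfrac{n(n-1)}{2}\overline{F}(t)^2$ together with $O(\eta(t)^3)$ can be absorbed into the $(1+o(1))$ factor attached to $\tfrac{n(n-1)}{2}v(t)^2$. Since $\eta^3 = O((\overline{F}+v)^3)$ is always $o(v^2)$ once $\overline{F} = O(v)$, the decisive point is to estimate $\overline{F}$ against $v$. The natural tool is the Markov-style bound
\[
t^{\alpha}\overline{F}(t) \le \int_{t}^{\infty} y^{\alpha}\,F(dy) = m_{\nu}^{(\alpha)} - H(t),
\]
which in the finite $\alpha$-moment case forces $\overline{F}(t) = o(t^{-\alpha})$ while $v(t) \sim m_{\nu}^{(\alpha)} t^{-\alpha}$, so $\overline{F}^2 = o(v^2)$; in the remaining regimes the residual $\overline{F}^2$ term is in any case dominated by $n\overline{F}$ (this is the precise version exploited in the subsequent corollary).

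The main obstacle is the bookkeeping in the second-order expansion: one must carry the expansions of $(1-\eta)^n$ and $(1-\eta)^{n-1}$ far enough that the $\overline{F} v$ cross-terms cancel cleanly, and must then verify that the discarded $\overline{F}^2$ and $\eta^3$ contributions are of smaller order than the leading terms uniformly across the three asymptotic regimes (finite $\alpha$-moment, regularly varying tail with index in $(-\alpha, 0)$, and very light tails with $\overline{F}(t) = o(t^{-2\alpha})$) in which the corollary is subsequently applied.
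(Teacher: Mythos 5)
Your proposal is essentially correct and its algebra checks out: writing $v(x) = x^{-\alpha}H(x)$ and $\eta = 1-G = \overline{F}+v$, the second-order expansion of $G^n + nvG^{n-1}$ does give
\[
\overline{F}_n(x) = n\overline{F}(x) + \tfrac{1}{2}n(n-1)\bigl(v(x)^2 - \overline{F}(x)^2\bigr) + O\bigl(\eta(x)^3\bigr),
\]
with the $\overline{F}v$ cross-terms cancelling exactly as you claim. The paper instead proceeds in two steps: it first proves Theorem \ref{th.fn.as} by expanding the exact product $\bigl(F - v\bigr)^{n-1}\bigl(F + (n-1)v\bigr)$ in powers of $v$ relative to $F$, using the identity $(1-a)^{n-1}(1+a(n-1)) = 1 + n\sum_{k=1}^{n-1}(-1)^{k-1}\binom{n-1}{k-1}\frac{k-1}{k}a^k + (-1)^{n-1}(n-1)a^n$ (the vanishing of the $k=1$ coefficient is the paper's version of your cross-term cancellation), and then declares the Corollary a direct consequence. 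Your single expansion around $G=1-\eta$ is a reorganization of the same computation, and your intermediate identity is if anything sharper than the paper's theorem, which attaches the $(1+o(1))$ only to the $v^2$ term and leaves the error in $1-F^n-n\overline{F}$ implicit.

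The one ingredient your plan does not supply concerns regime (i). There the target is $\overline{F}_n = n\overline{F}(1+o(1))$, so you must discard not only $\overline{F}^2$ and $\eta^3$ but the entire $\tfrac{1}{2}n(n-1)v^2$ term, i.e.\ you need $v(x)^2 = o(\overline{F}(x))$. Your Markov-style bound $x^\alpha\overline{F}(x)\le m_\nu^{(\alpha)} - H(x)$ is vacuous here, since $m_\nu^{(\alpha)}=\infty$ when $0<\theta$; the comparison has to run the other way, via Karamata's theorem: $\overline{F}\in RV_{\theta-\alpha}$ implies $H(x)\sim \frac{\alpha-\theta}{\theta}\,x^\alpha\overline{F}(x)$, hence $v\asymp\overline{F}$ and $v^2,\eta^3 = o(\overline{F})$. (The paper glosses over this point as well, calling the Corollary immediate.) In regime (iii) you should also record that $\overline{F}(x)=o(x^{-2\alpha})$ forces $m_\nu^{(\alpha)}<\infty$, so that $v(x)\sim m_\nu^{(\alpha)}x^{-\alpha}$ and the term $n\overline{F}(x)=o(x^{-2\alpha})$ is absorbed into the $(1+o(1))$ of the $x^{-2\alpha}$ term. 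With these two observations added, your argument is complete.
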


\begin{Remark}
It shows that in case of regularly varying step distribution $\nu$, the tail distribution of random variable $X_n$ is asymptotically equivalent to the maximum of $n$ i.i.d. random variables with distribution $\nu$. \\
\end{Remark}

In the next proposition, we investigate the limit distribution for Kendall random walks in case of finite $\alpha$-moment as well as for regularly varying tail of the unit step. We start with the following observation.

\begin{Remark}
Due to Proposition 1 in \cite{Bin71} random variable $X$ belongs to the domain of attraction of a stable measure with respect to
the Kendall convolution if and only if $1 - G(t)$ is regularly varying function at $\infty$.
\end{Remark}

Notice that $1 - G(t)$ is regularly varying whenever the random variable $X$ has finite $\alpha$-moment or its tail is regularly varying at infinity. The following Proposition formalizes this observation
providing formulas for stable distributions with respect to Kendall convolution.

\begin{Proposition}\label{thm:CLT} 
Let $\{X_n : n \in \mathbb{N}_0\}$ be a Kendall random walk with parameter $\alpha>0$ and unit step distribution $\nu \in\mathcal{P}_+$
	\\
{\rm (i)} If $m_{\nu}^{(\alpha)} <\infty$, then as $n \toi$,
\[
n^{-1/\alpha} X_n  \stackrel{d}{\to} X,
\]
where the cdf of random variable $X$ is given by
\begin{eqnarray} \label{cdf:mu}
		\rho_{\nu,\alpha,\theta}(0,x]
	=
			\left( 1 + m_{\nu}^{(\alpha)} x^{-\alpha} \right) e^{-m_{\nu}^{(\alpha)} x^{-\alpha}} \pmb{1}_{(0,\infty)}(x)
\end{eqnarray}
and the pdf of $X$ is given by
\begin{equation} \label{pdf:mu}
    \rho_{\nu,\alpha}(dx)
=
    \alpha  \left( m_{\nu}^{(\alpha)} \right)^2 x^{-2\alpha-1} \exp\{-m_{\nu}^{(\alpha)} x^{-\alpha}\} \pmb{1}_{(0,\infty)}(y) dx.
\end{equation}
{\rm (ii)} If $\overline{F}$ is regularly varying as $x \toi$ with parameter $\theta - \alpha$,
where $0 \leqslant \theta < \alpha$, then there exists a sequence $\{a_n\}$, $a_n \to \infty$, such that
\[
a_n^{-1} X_n  \stackrel{d}{\to}  X,
\]
where the cdf of random variable $X$ is given by
\begin{eqnarray} \label{cdf:reg}
		\rho_{\nu,\alpha,\theta}(0,x]
	=
			\left( 1 + x^{-(\alpha-\theta)} \right) e^{-x^{-(\alpha-\theta)}}\pmb{1}_{(0,\infty)}(x)
\end{eqnarray}
and the pdf of $X$ is given by
\begin{equation} \label{pdf:reg}
\rho_{\nu,\alpha,\theta}(dx)=\alpha  x^{-2(\alpha-\theta)-1} \exp\{-x^{-(\alpha-\theta)}\} \pmb{1}_{(0,\infty)}(x) dx.
\end{equation}

\end{Proposition}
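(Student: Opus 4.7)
The plan is to base the entire argument on the explicit formula
\[ F_n(t) = G(t)^{n-1}\bigl[n t^{-\alpha} H(t) + G(t)\bigr] \]
from Proposition \ref{prop:Fn}(ii), combined with the identity $1 - G(t) = \overline{F}(t) + t^{-\alpha} H(t)$, which is immediate from Definition \ref{Williamson:def}. In both parts I would substitute $t = a_n x$ for a suitable normalizing sequence $a_n \toi$, factor $F_n(a_n x)$ into $G(a_n x)^{n-1}$ and $n (a_n x)^{-\alpha} H(a_n x) + G(a_n x)$, and then invoke the standard log-linearization $(1-\varepsilon_n)^{n-1} \to e^{-c}$ whenever $n\varepsilon_n \to c$.

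For Part (i), under $m_\nu^{(\alpha)} < \infty$ I would use dominated convergence to obtain $H(t) \to m_\nu^{(\alpha)}$, and the Markov-type bound $t^\alpha \overline{F}(t) \le \int_t^\infty y^\alpha\,\nu(dy) \to 0$ to conclude $\overline{F}(t) = o(t^{-\alpha})$. These combined with the identity above give $1 - G(t) = m_\nu^{(\alpha)} t^{-\alpha}(1+o(1))$. Choosing $a_n = n^{1/\alpha}$, the cancellation $n t^{-\alpha} = x^{-\alpha}$ yields $n(1 - G(n^{1/\alpha}x)) \to m_\nu^{(\alpha)} x^{-\alpha}$, hence $G(n^{1/\alpha}x)^{n-1} \to \exp(-m_\nu^{(\alpha)} x^{-\alpha})$, while $n (n^{1/\alpha}x)^{-\alpha} H(n^{1/\alpha}x) + G(n^{1/\alpha}x) \to m_\nu^{(\alpha)} x^{-\alpha} + 1$. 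Taking the product identifies the stated cdf, and differentiating it gives the pdf \refs{pdf:mu}.

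For Part (ii) the essential step is to promote regular variation of $\overline{F}$ to regular variation of both $1 - G$ and $t \mapsto t^{-\alpha} H(t)$. Using Proposition \ref{prop:Fn}(i) I would rewrite $1 - G(t) = \alpha t^{-\alpha}\int_0^t y^{\alpha-1}\overline{F}(y)\,dy$, observe that $y^{\alpha-1}\overline{F}(y)$ is regularly varying with index $\theta - 1$, and apply Karamata's theorem to derive $1 - G(t) \sim (\alpha/\theta)\overline{F}(t)$ for $\theta > 0$ and a purely slowly varying refinement for $\theta = 0$. A parallel integration by parts gives $t^{-\alpha} H(t) \sim ((\alpha-\theta)/\theta)\overline{F}(t)$ when $\theta > 0$. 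Choosing $a_n$ as the asymptotic inverse of $1/(1 - G(\cdot))$ (normalised so that $n(1 - G(a_n)) \to 1$), regular variation with index $\theta - \alpha$ gives $n(1 - G(a_n x)) \to x^{-(\alpha-\theta)}$, so $G(a_n x)^{n-1} \to \exp(-x^{-(\alpha-\theta)})$, and the parallel asymptotic for $n a_n^{-\alpha} H(a_n x)$ produces the polynomial factor $1 + x^{-(\alpha-\theta)}$.

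The main obstacle is the calibration of constants in Part (ii): the three regularly varying quantities $\overline{F}(t)$, $t^{-\alpha}H(t)$ and $1 - G(t)$ are asymptotically proportional but with $\theta$-dependent ratios, so the normalization $a_n$ must be carefully tuned (possibly by a $\theta$-dependent constant factor) to make the exponential and polynomial factors assemble into the claimed cdf $(1 + x^{-(\alpha-\theta)})\exp(-x^{-(\alpha-\theta)})$. The boundary case $\theta = 0$ also demands separate care, since Karamata then identifies $1 - G$ only up to a slowly varying factor, which must be absorbed into $a_n$ rather than eliminated; the pdf \refs{pdf:reg} is then recovered from the limiting cdf by direct differentiation.
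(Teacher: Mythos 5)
Your overall route is the paper's route: both arguments reduce everything to the asymptotics of $G$, $H$ and $\overline{F}$ under the two hypotheses and then pass to the limit in an $n$-th power. The only real difference in part (i) is that you take the limit directly in the explicit formula $F_n(t)=G(t)^{n-1}[nt^{-\alpha}H(t)+G(t)]$ of Proposition \ref{prop:Fn}(ii), whereas the paper computes $\lim_n G(a_n/x)^n=e^{-m_\nu^{(\alpha)}x^{\alpha}}$ and then identifies the limit law by inverting the Williamson transform via Proposition \ref{prop:Fn}(i); your version is marginally more self-contained (no continuity theorem for the Williamson transform is needed), and your observations $H(t)\to m_\nu^{(\alpha)}$ and $\overline{F}(t)=o(t^{-\alpha})$ are exactly what the paper uses. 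Part (i) is fine.

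In part (ii) the ``calibration of constants'' that you flag as the main obstacle is a genuine gap, and it cannot be closed the way you hope. Rescaling $a_n$ by a constant multiplies $\lim_n n\bigl(1-G(a_nx)\bigr)$ and $\lim_n n\,a_n^{-\alpha}H(a_nx)$ by the \emph{same} factor, while Karamata fixes their ratio: for $\theta>0$ one has $t^{-\alpha}H(t)\sim\frac{\alpha-\theta}{\theta}\overline{F}(t)$ and hence $1-G(t)=\overline{F}(t)+t^{-\alpha}H(t)\sim\frac{\alpha}{\alpha-\theta}\,t^{-\alpha}H(t)$, so whatever normalization you choose the limit of $F_n(a_nx)$ has the form $\bigl(1+cx^{-(\alpha-\theta)}\bigr)\exp\{-\tfrac{\alpha}{\alpha-\theta}cx^{-(\alpha-\theta)}\}$; the polynomial and exponential coefficients can be made equal only when $\theta=0$. (Equivalently: the inversion formula $F=G+\tfrac{t}{\alpha}G'$ applied to $G(t)=e^{-At^{-(\alpha-\theta)}}$ always produces the polynomial coefficient $\tfrac{\alpha-\theta}{\alpha}A$, never $A$, unless $\theta=0$.) The paper reaches the stated formula \refs{cdf:reg} by normalizing $nH(a_n)/a_n^{\alpha}\to1$ via Lemma \ref{Han} and then treating $n\overline{F}(a_n/x)$ as negligible in $G^n=(1-\overline{F}-t^{-\alpha}H)^n$, which is justified only for $\theta=0$ (where indeed $\overline{F}(t)=o(t^{-\alpha}H(t))$). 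So you have correctly diagnosed a real inconsistency; the honest fix is either to restrict the clean statement to $\theta=0$, or to adopt the paper's normalization and record the limit with the ratio $\tfrac{\alpha}{\alpha-\theta}$ in the exponent --- not to search for a constant tuning of $a_n$, which does not exist.
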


The proof of Proposition \ref{thm:CLT} is presented in Section \ref{thm:CLT:proof}.\\

Now we define a new stochastic process $\{Z_n(t): n \in \mathbb{N}_0\}$ connected with the Kendall random walk $\{X_n: n \in \mathbb{N}_0\}$ such that
\[
\left\{Z_n(t)\right\} \stackrel{d}{=} \left\{a_n^{-1} X_{[nt]}\right\},
\]
where $[\cdot]$ denotes integer part and the sequence $\{a_n\}$ is such that $a_n > 0$ and $\lim_{n \to \infty} a_n = \infty$.

In the following theorem, we prove convergence of the finite-dimensional distributions of the process
$\{Z_n(t)\}$, for appropriately chosen sequence $\{a_n\}$.
\begin{Theorem}\label{thm:fidi_RV}
Let $\{X_n : n \in \mathbb{N}_0\}$ be a Kendall random walk with parameter $\alpha>0$ and unit step distribution $\nu \in\mathcal{P}_+$.
\begin{itemize}
\item[{\rm (i)}] If $m_{\nu}^{(\alpha)}<\infty $ and
$
a_n = n^{1/\alpha} (1 + o(1))$, as $n\to \infty$, then
\[
\{Z_n(t)\}\stackrel{fdd}{\rightarrow} \{Z(t)\},
\]
where, for any
$0 = t_0 \le t_1 \le ... \le t_k$, the finite-dimensional distributions of $\{Z(t)\}$ are given by
\begin{eqnarray*}
\lefteqn{
P \left( Z(t_1)\le z_1, Z(t_2)\le z_2,\cdots, Z(t_k)\le z_k \right)}\\
& = &
\sum\limits_{({\epsilon}_1,{\epsilon}_2,\cdots,{\epsilon}_k) \in \{0,1\}^k}
		\prod_{i=1}^{s-1} \Psi\left(\frac{z_{\tilde{\epsilon}_i}}{z_{\tilde{\epsilon}_{i+1}}}\right)
		\prod_{j=1}^k \left(\frac{\left(t_j - t_{j-1}\right)}{z_j^\alpha} m_{\nu}^{(\alpha)} \right)^{\epsilon_j} \exp\left\{ - m_{\nu}^{(\alpha)} \sum\limits_{i=1}^kz_i^{-\alpha} (t_i-t_{i-1})\right\},
\end{eqnarray*}
\item[{\rm (ii)}] If $\overline{F}(\cdot)$ is regularly varying as $x \toi$ with parameter $\theta - \alpha$, where $0 \leqslant \theta < \alpha$, then there exists a sequence $\{a_n\}, a_n \toi$ such that
\[
Z_n(t)\stackrel{fdd}{\rightarrow} Z(t),
\]
where, for any
$0 = t_0 \le t_1 \le ... \le t_k$, the finite-dimensional distributions of $\{Z(t)\}$ are given by
\begin{eqnarray*}
\lefteqn{
P \left( Z(t_1)\le z_1, Z(t_2)\le z_2,\cdots, Z(t_k)\le z_k \right)}\\
& = &
\sum\limits_{({\epsilon}_1,{\epsilon}_2,\cdots,{\epsilon}_k) \in \{0,1\}^k}
		\prod_{i=1}^{s-1} \Psi\left(\frac{z_{\tilde{\epsilon}_i}}{z_{\tilde{\epsilon}_{i+1}}}\right)
		\prod_{j=1}^k \left(\left(t_j - t_{j-1}\right) z_j^{\theta-\alpha} \right)^{\epsilon_j} \exp\left\{ -  \sum\limits_{i=1}^kz_i^{\theta - \alpha} (t_i-t_{i-1})\right\},
\end{eqnarray*}
\end{itemize}
where in both above cases we have
\[
		\prod_{i=1}^{s-1} \Psi\left(\frac{z_{\tilde{\epsilon}_i}}{z_{\tilde{\epsilon}_{i+1}}}\right) = 1 \ \ \ {\rm for} \ \ \ s \in \{0,1\}
\]
with
\[
\tilde{\epsilon}_1 = \min \left\{i : \epsilon_i = 1 \right\},  \; \dots ,
\tilde{\epsilon}_m := \min \left\{i>\tilde{\epsilon}_{m-1} : \epsilon_i = 1 \right\}, \ \ \ m=1,2, \dots , s, \ \ \ s = \sum_{i = 1}^k \epsilon_i.
\]

\end{Theorem}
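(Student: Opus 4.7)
My plan is to derive this directly from the explicit finite-dimensional distribution formula given by Theorem \ref{thm:b}, applied with $n_j \mapsto [n t_j]$ and $x_j \mapsto a_n z_j$, and then to pass to the limit $n \to \infty$ inside the resulting finite sum over $(\epsilon_1,\dots,\epsilon_k) \in \{0,1\}^k$. Since the sum has only $2^k$ terms and each is a finite product, it suffices to prove termwise convergence of the products. The $\Psi$-factors $\Psi(z_{\tilde\epsilon_i}/z_{\tilde\epsilon_{i+1}})$ do not depend on $n$ and pass through unchanged, so the task reduces to analyzing, for each $j$, the asymptotics of
\[
(G(a_n z_j))^{[nt_j]-[nt_{j-1}]-\epsilon_j} \qquad \text{and} \qquad \frac{([nt_j]-[nt_{j-1}])\,H(a_n z_j)}{(a_n z_j)^\alpha}.
\]

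These are essentially the asymptotics already established in Proposition \ref{thm:CLT}. The two identities driving the calculation, both consequences of Proposition \ref{prop:Fn}(i), are
\[
1 - G(t) = \overline{F}(t) + t^{-\alpha} H(t), \qquad 1 - G(t) = \alpha t^{-\alpha} \int_0^t y^{\alpha-1} \overline{F}(y)\,dy.
\]
In case (i) the hypothesis $m_\nu^{(\alpha)} < \infty$ forces $H(t) \to m_\nu^{(\alpha)}$ and $\overline{F}(t) = o(t^{-\alpha})$, hence $1 - G(t) \sim m_\nu^{(\alpha)} t^{-\alpha}$. Combined with $a_n = n^{1/\alpha}(1+o(1))$ and $[nt_j] - [nt_{j-1}] = n(t_j-t_{j-1})(1+o(1))$ this gives
\[
(G(a_n z_j))^{[nt_j]-[nt_{j-1}]-\epsilon_j} \longrightarrow \exp\bigl\{-m_\nu^{(\alpha)} z_j^{-\alpha}(t_j-t_{j-1})\bigr\},
\]
since the $\epsilon_j$ shift is negligible as $G(a_n z_j) \to 1$, and simultaneously
\[
\frac{([nt_j]-[nt_{j-1}])\,H(a_n z_j)}{(a_n z_j)^\alpha} \longrightarrow \frac{(t_j-t_{j-1})\,m_\nu^{(\alpha)}}{z_j^\alpha}.
\]
The exponential factor is independent of $(\epsilon_1,\dots,\epsilon_k)$, so it pulls out of the sum in Theorem \ref{thm:b}, yielding the stated formula for (i).

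In case (ii) the same integral identity together with Karamata's theorem shows that $1 - G(t)$ and $H(t)/t^\alpha$ are regularly varying with index $\theta - \alpha$. The sequence $\{a_n\}$ is taken as in Proposition \ref{thm:CLT}(ii), so that the appropriate limit $n(1-G(a_n)) \to 1$ (or $nH(a_n)/a_n^\alpha \to 1$) holds; regular variation then upgrades this to $n(1-G(a_n z_j)) \to z_j^{\theta-\alpha}$ and $n H(a_n z_j)/(a_n z_j)^\alpha \to z_j^{\theta-\alpha}$, uniformly in $z_j$ on compact subsets of $(0,\infty)$ by the uniform convergence theorem for regularly varying functions. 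The remainder of the argument is then identical to case (i): exponentiating the logarithm of $G(a_n z_j)^{[nt_j]-[nt_{j-1}]}$ gives $\exp\{-z_j^{\theta-\alpha}(t_j-t_{j-1})\}$, the $H$-factor produces $(t_j-t_{j-1})z_j^{\theta-\alpha}$, and the exponential is again pulled out of the sum.

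The main obstacle I anticipate is the bookkeeping in case (ii), where I need a single normalizing sequence $\{a_n\}$ to simultaneously control the asymptotics of $G$ and of $H$ along each argument $a_n z_j$; this is where the uniform-convergence version of Karamata's theorem does the work, ensuring that the $k$ factor-limits all hold at once. Everything else is routine: the sum over $\{0,1\}^k$ is finite, each factor in each term is bounded and converges to a finite positive limit, and the $\Psi$-contributions are manifestly continuous in the $z_j$, so limits and finite products/sums can be freely interchanged.
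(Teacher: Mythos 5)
Your proposal is correct and follows essentially the same route as the paper: apply Theorem \ref{thm:b} with $n_j\mapsto[nt_j]$, $x_j\mapsto a_nz_j$, then take termwise limits of the $G$- and $H$-factors exactly as in the proof of Proposition \ref{thm:CLT}, choosing $a_n$ via the same construction (the paper's Lemma \ref{Han}) in case (ii). Your added remarks on Karamata's theorem and uniform convergence of regularly varying functions only make explicit what the paper leaves implicit.
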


The proof of Theorem \ref{thm:fidi_RV} is presented in Section \ref{thm:fidi_RV:proof}.\\

\section{Proofs}
In this section, we present detailed proofs of our results.

\subsection{Proof of Proposition \ref{kernel:prop}} \label{kernel:prop:proof}
Due to the independence of sequences $\{Y_k\}$, $\{\xi_k\}$, and $\{\theta_k\}$, it follows directly from the Definition \ref{def_krm} that the process $\{X_n\}$ satisfies the Markov property. \\
Now, let $n \in \mathbb{N}$ be fixed.
We shall show that, for all $k \in \mathbb{N}, A \in \mathcal{B}(\mathbb{R}_{+}), x\ge 0, \alpha >0$, the transition probabilities of the process $\{X_n\}$ are of the form \refs{kernel}.
In order to do this we proceed by induction. By Definition \ref{def_krm} we have
\begin{eqnarray*}
\lefteqn{
\pr \left( X_n \in A | X_{n-1} = x\right)}\\
&=&
\int\limits_{0}^{\infty} \pr \left( X_n \in A | X_{n-1} = x, Y_n = y \right) \nu(dy) \\
&=& \int\limits_{0}^{\infty}  \left\{ \pr \left( \max(x,y) \theta_n \in A, \xi_n <  \left(\frac{\min(x,y)}{\max(x,y)}\right)^{\alpha} \right) +
\mathbf{I}_{A}(\max(x,y))
\pr \left(\xi_n > \left(\frac{\min(x,y)}{\max(x,y)}\right)^\alpha\right)  \right\} \nu(dy).
\end{eqnarray*}
Moreover, by the independence of the random variables $\theta_n$ and $\xi_n$ the above expression is equal to
\begin{eqnarray}
\nonumber
\lefteqn{ \int\limits_{0}^{\infty} \left\{\left(\frac{\min(x,y)}{\max(x,y)}\right)^{\alpha} \pr \left( \max(x,y) \theta_n \in A \right) +  \left(1-\left(\frac{\min(x,y)}{\max(x,y)}\right)^{\alpha}\right) \mathbf{I}_{A}(\max(x,y))\right\}\nu(dy)} \\
&=& \nonumber
\int\limits_{0}^{\infty} T_{\max(x,y)}\left[\left(\frac{\min(x,y)}{\max(x,y)}\right)^{\alpha} \pi_{2\alpha}(A) +  \left(1-\left(\frac{\min(x,y)}{\max(x,y)}\right)^{\alpha}\right) \delta_1(A)\right]\nu(dy) \\
&=& \label{by:Kendal}
\int\limits_{0}^{\infty} \left(T_{\max(x,y)} \left(\delta_{\frac{\min(x,y)}{\max(x,y)}} \vartriangle_{\alpha} \delta_1\right)\right)(A)\nu(dy)\\
&=& \label{step_1}
\int\limits_{0}^{\infty} \left(\delta_x \vartriangle_{\alpha} \delta_y\right)(A)\nu(dy) =
\left(\delta_x \vartriangle_{\alpha} \nu \right)(A),
\end{eqnarray}
where \refs{by:Kendal} follows from Definition
\ref{Kendal:def} and \refs{step_1} follows from \refs{conv}.
This completes the first step of the proof by induction.

Now, assuming that
\begin{eqnarray} \label{step_k}
\pr \left( X_{n+k} \in A | X_n = x\right) = \left(\delta_x \vartriangle_{\alpha} \nu^{\vartriangle_{\alpha} k}\right) (A)
\end{eqnarray}
holds for $k \ge 2$ we establish its validity for $k + 1$.

Due to the Chapman-Kolmogorov equation for the process $\{X_n\}$ we have
\begin{eqnarray}
    \pr \left( X_{n+k+1} \in A | X_{n} = x\right)
&=& \nonumber
    \int\limits_{0}^{\infty}\int\limits_{A} P_1(y, dz) P_k(x, dy) \\
&=& \label{step_1k}
    \int\limits_{0}^{\infty}\left(\delta_y \vartriangle_{\alpha} \nu \right)(A) \left(\delta_x \vartriangle_{\alpha} \nu ^ {\vartriangle_{\alpha}  k} \right) (dy) \\
&=& \label{int_conv}
    \left(\delta_x \vartriangle_{\alpha} \nu ^ {\vartriangle_{\alpha}  k+1} \right)(A),
\end{eqnarray}
where \refs{step_1k} follows from  \refs{step_1} and \refs{step_k} while \refs{int_conv} follows from \refs{conv}.
This completes the induction argument and the proof.
\Halmos

\subsection{Proof of Lemma \ref{lem:transit}} \label{lem:transit:proof}
First notice that by Definition \ref{Kendal:def}, we have
\begin{eqnarray} \nonumber
	\left(\delta_x \vartriangle_{\alpha} \delta_y\right) ((0,t]) & = & \left(\frac{\min(x,y)}{\max(x,y)}\right)^{\alpha}
	\pr \left( \max(x,y) \theta \leq t \right) +  \left(1-\left(\frac{\min(x,y)}{\max(x,y)}\right)^{\alpha}\right)
	\mathbf{1}_{\{\max(x,y) < t\}}\\
& = &		\label{delta_conv}
	\left(1-\frac{x^{\alpha}y^{\alpha}}{t^{2\alpha}}\right) \mathbf{1}_{\{x < t, y < t\}} \\
&=&		\label{transit_deltas}
	\left[ \Psi\left(\frac{x}{t}\right) + \Psi\left(\frac{y}{t}\right) - \Psi\left(\frac{x}{t}\right)
	\Psi\left(\frac{y}{t}\right)\right] \mathbf{1}_{\{x < t, y < t\}},
\end{eqnarray}
where \refs{transit_deltas} is a direct application of \refs{Psi}.
In order to prove (i), observe that by \refs{conv} and \refs{transit_deltas} we have
\begin{eqnarray}
P_n (x, ((0,t]))
&=& \nonumber
\int\limits_0^{\infty} \left(\delta_x \vartriangle_{\alpha} \delta_y\right) (0,t) \nu^{\vartriangle_{\alpha} n}(dy)\\
& = & \nonumber
\int\limits_0^{t} \left[ \Psi\left(\frac{x}{t}\right) + \Psi\left(\frac{y}{t}\right) - \Psi\left(\frac{x}{t}\right) \Psi\left(\frac{y}{t}\right)\right] \mathbf{1}_{\{x < t \}}  \nu^{\vartriangle_{\alpha} n}(dy) \\
& = &  \label{psi_rep_G}
\left[\Psi \left( \frac{x}{t} \right) F_n(t) + \left(1-\Psi \left( \frac{x}{t} \right)\right) G_n(t)\right] \mathbf{1}_{\{x < t \}},
\end{eqnarray}
where \refs{psi_rep_G} holds by Definition \ref{Williamson:def}.
In order to complete the proof of the case (i) it suffices to combine \refs{psi_rep_G} with \refs{Williamson:n} and
Proposition \ref{prop:Fn} (ii).

To prove (ii), observe that integration by parts leads to
\begin{eqnarray}
 \int\limits_0^t w^{\alpha} \left(\delta_x \vartriangle_{\alpha} \delta_y\right) (dw)
&=& \nonumber
	t^{\alpha} \left(\delta_x \vartriangle_{\alpha} 	\delta_y\right)(0,t) - \int\limits_0^t \alpha w^{\alpha-1}
	\left(\delta_x \vartriangle_{\alpha} \delta_y\right)(0,w) dw \\
 &=& \label{trans_integ}
	\left( x^{\alpha} - 2 \frac{x^{\alpha} y^{\alpha}}{t^{\alpha}} + y^{\alpha} \right) \mathbf{1}_{\{x \vee y < t \}},
\end{eqnarray}
where \refs{trans_integ} follows from \refs{delta_conv}. Applying \refs{trans_integ} we obtain
\begin{eqnarray}
 \int\limits_0^t w^{\alpha} P_n(x,dw)
& = &  \nonumber
	\int\limits_0^{\infty} \int\limits_0^t w^{\alpha}
	\left(\delta_x \vartriangle_{\alpha} \delta_y \right) (dw) \nu^{\vartriangle_{\alpha} n}(dy) \\
&=& \nonumber
	\int\limits_0^t \left( x^{\alpha} - 2 \frac{x^{\alpha} y^{\alpha}}{t^{\alpha}} + y^{\alpha} \right)
	\nu^{\vartriangle_{\alpha} n}(dy) \mathbf{1}_{\{x < t \}} \\
& = & \label{Hn_integ}
	\left( x^{\alpha} F_n(t)- 2 \frac{x^{\alpha}}{t^{\alpha}} H_n(t) + H_n(t)\right) \mathbf{1}_{\{x < t \}},
\end{eqnarray}
where $H_n(t) := \int_0^t y^\alpha  \nu^{\vartriangle_{\alpha} n}(dy) = t ^{\alpha} \left( F_n(t) -  G_n(t) \right)$ by \refs{F:eq}.
Finally, the proof of the case (ii) is completed by combining  \refs{Hn_integ} with Proposition \ref{prop:Fn} (ii).
\Halmos

\subsection{Proof of Lemma \ref{thm:a}}	\label{thm:a:proof}
Let $k=1$. Then by Lemma \ref{lem:transit} we obtain
\begin{eqnarray*}
\lefteqn{
	\int\limits_{0}^{x_1}\Psi\left(\frac{y_1}{x_2}\right)  P_{n_1}(y_0, dy_1)
=
	P_{n_1}(y_0, ((0,x_1]))
	- x_2^{-\alpha} \int\limits_0^{x_1}  y_1^{\alpha} P_n(y_0, dy_1)}\\
	& = &
		\left[\Psi\left( \frac{y_0}{x_2} \right) G^{n_1}(x_1) +
		\frac{n_1}{x_1^{\alpha}} G^{n_1-1}(x_1) H_{1}(x_1) \Psi\left( \frac{y_0}{x_1} \right) \Psi\left( \frac{x_1}{x_2} \right)\right] \mathbf{1}_{\{y_0 < x_1\}},
\end{eqnarray*}
which ends the first step of proof by induction.

Now, assume that the formula holds for $k\in\mathbb{N}$. We shall establish its validity for $k+1$.
Let
\[
	\tilde{\eta}_1 := \min \left\{i \ge 2: \epsilon_i = 1 \right\},  \; \dots ,
	\tilde{\eta}_m := \min \left\{i>\tilde{\eta}_{m-1} : \epsilon_i = 1 \right\}, \ \ \ m=1,2, \dots , s_2, \ \ \ s_2 := \sum_{i = 2}^{k+1} \epsilon_i.
\]
Additionally, we denote
$s_1 := \sum_{i = 1}^{k+1} \epsilon_i$.\\
Moreover, let
\[
 \mathcal{A}_{k+1}^0 := \{ (0,\epsilon_2,\epsilon_3,\cdots,\epsilon_{k+1}) \in \{0,1\}^{k+1}: (\epsilon_2,\epsilon_3,\cdots,\epsilon_{k+1}) \in \mathcal{A}_k \},
\]
\[
 \mathcal{A}_{k+1}^1 := \{ (1,\epsilon_2,\epsilon_3,\cdots,\epsilon_{k+1}) \in \{0,1\}^{k+1}: (\epsilon_2,\epsilon_3,\cdots,\epsilon_{k+1}) \in \mathcal{A}_k \}.
\]
By splitting $\mathcal{A}_{k+1}$ into four subfamilies of sets: $\mathcal{A}_{k+1}^0$, $\mathcal{A}_{k+1}^1$, $\{(1,0,...,0)\}$, and $\{(0,...,0)\}$ and
applying the formula for $k$ and the first induction step we obtain
\begin{eqnarray}
\nonumber
\lefteqn{
\int\limits_{0}^{x_1}\left[\int\limits_{0}^{x_2} \cdots \int\limits_{0}^{x_{k+1}} \Psi\left(\frac{y_{k+1}}{x_{k+2}}\right) P_{n_{k+1}-n_{k}}(y_{k}, dy_{k+1}) \cdots P_{n_2-n_1}(y_1, dy_2) \right] P_{n_1}(y_0, dy_1)}\\
	\nonumber
	&=&
		\sum\limits_{(\epsilon_2,\epsilon_3,\cdots,\epsilon_{k+1}) \in \mathcal{A}_k}
		\Psi \left( \frac{x_{\tilde{\eta}_{s_2}}}{x_{k+2}} \right)
		\prod_{i=1}^{s_2-1} \Psi\left(\frac{x_{\tilde{\eta}_i}}{x_{\tilde{\eta}_{i+1}}}\right)
		\prod_{j=2}^{k+1} (G(x_j))^{n_j - n_{j-1} - \epsilon_j} \left(\frac{(n_j - n_{j-1})H(x_j)}{x_j^\alpha}\right)^{\epsilon_j}\\
	\nonumber
	&\cdot&	
		\int_0^{x_1}\Psi \left( \frac{y_1}{x_{\tilde{\eta}_1}} \right) P_{n_1}(y_0, dy_1)
		+ \prod_{j=2}^{k+1} (G(x_j))^{n_j - n_{j-1}} \int_0^{x_1}\Psi \left( \frac{y_1}{x_{k+2}} \right) P_{n_1}(y_0, dy_1)\\
	&=& \label{req_sum}
	    S[\mathcal{A}_{k+1}^0]+
	    S[\mathcal{A}_{k+1}^1]+
	    S[\{(1,0,...,0)\}]+
	    S[\{(0,...,0)\}],
\end{eqnarray}
where
\begin{eqnarray}
	 S[\mathcal{A}_{k+1}^0]
    &=&	 \nonumber
	 \sum\limits_{(0,\epsilon_2,\epsilon_3,\cdots,\epsilon_{k+1}) \in \mathcal{A}_{k+1}^0}
		\Psi \left( \frac{y_0}{x_{\tilde{\eta}_1}} \right)
		\Psi \left( \frac{x_{\tilde{\eta}_{s_2}}}{x_{k+2}} \right)
		\prod_{i=1}^{s_2-1} \Psi\left(\frac{x_{\tilde{\eta}_i}}{x_{\tilde{\eta}_{i+1}}}\right)\\
	\nonumber
	&\cdot&
		(G(x_1))^{n_1}\prod_{j=2}^{k+1} (G(x_j))^{n_j - n_{j-1} - \epsilon_j} \left(\frac{(n_j - n_{j-1})H(x_j)}{x_j^\alpha}\right)^{\epsilon_j},
\end{eqnarray}		
\begin{eqnarray}
        S[\mathcal{A}_{k+1}^1]
    &=& \nonumber
		\sum\limits_{(1,\epsilon_2,\epsilon_3,\cdots,\epsilon_{k+1}) \in \mathcal{A}_{k+1}^1}
		\Psi \left( \frac{y_0}{x_1} \right)
		\Psi \left( \frac{x_{\tilde{\eta}_{s_2}}}{x_{k+2}} \right)
		\prod_{i=1}^{s_2-1} \Psi\left(\frac{x_{\tilde{\eta}_i}}{x_{\tilde{\eta}_{i+1}}}\right)
		\Psi\left(\frac{x_1}{x_{\tilde{\eta}_{1}}}\right)\\
	\nonumber
	&\cdot&		
		\frac{n_1}{x_1^{\alpha}} (G(x_1))^{n_1-1} H_{1}(x_1)
		\prod_{j=2}^{k+1} (G(x_j))^{n_j - n_{j-1} - \epsilon_j} \left(\frac{(n_j - n_{j-1})H(x_j)}{x_j^\alpha}\right)^{\epsilon_j},
\end{eqnarray}
\begin{eqnarray*}
        S[\{(1,0,...,0)\}]
    =
        \frac{n_1}{x_1^{\alpha}} G^{n_1-1}(x_1) H_{1}(x_1) \Psi\left( \frac{y_0}{x_1} \right) \Psi\left( \frac{x_1}{x_{k+2}} \right) \prod_{j=2}^{k+1} (G(x_j))^{n_j - n_{j-1}},
\end{eqnarray*}
\begin{eqnarray*}
        S[\{(0,...,0)\}]
    =
        \Psi \left( \frac{y_0}{x_{k+2}} \right)  G(x_1)^{n_1}\prod_{j=2}^{k+1} (G(x_j))^{n_j - n_{j-1}}.
\end{eqnarray*}

Observe that for any sequence $(0,\epsilon_2,\epsilon_3,\cdots,\epsilon_{k+1}) \in \mathcal{A}_{k+1}^0$ we have
$(\tilde{\epsilon}_1, \tilde{\epsilon}_2,...,\tilde{\epsilon}_{s_1}) = (\tilde{\eta}_1, \tilde{\eta}_2, ..., \tilde{\eta}_{s_2}) $, with
$s_1 = s_2$, which implies that
\begin{eqnarray*}
		\Psi \left( \frac{y_0}{x_{\tilde{\eta}_1}} \right)
		\Psi \left( \frac{x_{\tilde{\eta}_{s_2}}}{x_{k+2}} \right)
		\prod_{i=1}^{s_2-1} \Psi\left(\frac{x_{\tilde{\eta}_i}}{x_{\tilde{\eta}_{i+1}}}\right)
	=
		\Psi \left( \frac{y_0}{x_{\tilde{\epsilon}_1}} \right)\Psi \left( \frac{x_{\tilde{\epsilon}_{s_1}}}{x_{k+2}} \right)
		\prod_{i=1}^{s_1-1} \Psi\left(\frac{x_{\tilde{\epsilon}_i}}{x_{\tilde{\epsilon}_{i+1}}}\right).
\end{eqnarray*}
Morever
\begin{eqnarray*}
	\nonumber
	\lefteqn{
		(G(x_1))^{n_1}\prod_{j=2}^{k+1} (G(x_j))^{n_j - n_{j-1} - \epsilon_j} \left(\frac{(n_j - n_{j-1})H(x_j)}{x_j^\alpha}\right)^{\epsilon_j} }\\
	&=&
		\prod_{j=1}^{k+1} (G(x_j))^{n_j - n_{j-1} - \epsilon_j} \left(\frac{(n_j - n_{j-1})H(x_j)}{x_j^\alpha}\right)^{\epsilon_j}.
\end{eqnarray*}
Hence
\begin{eqnarray}
        S[\mathcal{A}_{k+1}^0]
	=   \label{A0}
       \hspace{-10mm} \sum\limits_{(0,\epsilon_2,\epsilon_3,\cdots,\epsilon_{k+1}) \in \mathcal{A}_{k+1}^0} \hspace{-7mm}
        \Psi \left( \frac{y_0}{x_{\tilde{\epsilon}_1}} \right)\Psi \left( \frac{x_{\tilde{\epsilon}_{s_1}}}{x_{k+2}} \right)
		\prod_{i=1}^{s_1-1} \Psi\left(\frac{x_{\tilde{\epsilon}_i}}{x_{\tilde{\epsilon}_{i+1}}}\right)
    \prod_{j=1}^{k+1} (G(x_j))^{n_j - n_{j-1} - \epsilon_j} \left(\frac{(n_j - n_{j-1})H(x_j)}{x_j^\alpha}\right)^{\epsilon_j}.
\end{eqnarray}

Analogously, for any sequence $(1,\epsilon_2,\epsilon_3,\cdots,\epsilon_{k+1}) \in \mathcal{A}_{k+1}^1$ we have
$(\tilde{\epsilon}_1, \tilde{\epsilon}_2,...,\tilde{\epsilon}_{s_1}) = (1, \tilde{\eta}_1, \tilde{\eta}_2, ..., \tilde{\eta}_{s_2})$,
with $s_1 = s_2 + 1$
which implies that
\begin{eqnarray}
		\Psi \left( \frac{y_0}{x_1} \right)
		\Psi\left(\frac{x_1}{x_{\tilde{\eta}_{1}}}\right)
		\Psi \left( \frac{x_{\tilde{\eta}_{s_2}}}{x_{k+2}} \right)
		\prod_{i=1}^{s_2-1} \Psi\left(\frac{x_{\tilde{\eta}_i}}{x_{\tilde{\eta}_{i+1}}}\right)
	\nonumber
	&=&
		\Psi \left( \frac{y_0}{x_{\tilde{\epsilon}_1}} \right)
		\Psi\left(\frac{x_1}{x_{\tilde{\eta}_{1}}}\right) \Psi \left( \frac{x_{\tilde{\epsilon}_{s_1}}}{x_{k+2}} \right)
		\prod_{i=1}^{s_1-2} \Psi\left(\frac{x_{\tilde{\epsilon}_{i+1}}}{x_{\tilde{\epsilon}_{i+2}}}\right) \\
	&=& \label{req_prod1}
	    \Psi \left( \frac{y_0}{x_{\tilde{\epsilon}_1}} \right)
	     \Psi \left( \frac{x_{\tilde{\epsilon}_{s_1}}}{x_{k+2}} \right)
	    \prod_{i=1}^{s_1-1} \Psi\left(\frac{x_{\tilde{\epsilon}_i}}{x_{\tilde{\epsilon}_{i+1}}}\right),
\end{eqnarray}
where \refs{req_prod1} is the consequence of
\[
		\Psi\left(\frac{x_1}{x_{\tilde{\eta}_{1}}}\right) \prod_{i=1}^{s_1-2} \Psi\left(\frac{x_{\tilde{\epsilon}_{i+1}}}{x_{\tilde{\epsilon}_{i+2}}}\right)
	=
		\prod\limits_{i=1}^{s_1-1} \Psi\left(\frac{x_{\tilde{\epsilon}_{i}}}{x_{\tilde{\epsilon}_{i+1}}}\right).
\]
Moreover,
\begin{eqnarray}
	\nonumber
	\lefteqn{
		\frac{n_1}{x_1^{\alpha}} (G(x_1))^{n_1-1} H_{1}(x_1)
		\prod_{j=2}^{k+1} (G(x_j))^{n_j - n_{j-1} - \epsilon_j} \left(\frac{(n_j - n_{j-1})H(x_j)}{x_j^\alpha}\right)^{\epsilon_j} }\\
	&=& \nonumber
		\prod_{j=1}^{k+1} (G(x_j))^{n_j - n_{j-1} - \epsilon_j} \left(\frac{(n_j - n_{j-1})H(x_j)}{x_j^\alpha}\right)^{\epsilon_j}.
\end{eqnarray}
Hence
\begin{eqnarray}
	    S[\mathcal{A}_{k+1}^1]
	=   \label{A1}
	    \hspace{-10mm}\sum\limits_{(1,\epsilon_2,\epsilon_3,\cdots,\epsilon_{k+1}) \in \mathcal{A}_{k+1}^1} \hspace{-7mm}
	    \Psi \left( \frac{y_0}{x_{\tilde{\epsilon}_1}} \right)
	     \Psi \left( \frac{x_{\tilde{\epsilon}_{s_1}}}{x_{k+2}} \right)
	    \prod_{i=1}^{s_1-1} \Psi\left(\frac{x_{\tilde{\epsilon}_i}}{x_{\tilde{\epsilon}_{i+1}}}\right)
	    \prod_{j=1}^{k+1} (G(x_j))^{n_j - n_{j-1} - \epsilon_j} \left(\frac{(n_j - n_{j-1})H(x_j)}{x_j^\alpha}\right)^{\epsilon_j}.
\end{eqnarray}
Additionally, observe that
\begin{eqnarray}
        S[\{(1,0,...,0)\}]
    = \label{S1}
        \Psi\left( \frac{y_0}{x_{\tilde{\epsilon}_1}} \right)
        \Psi\left( \frac{x_{s_1}}{x_{k+2}} \right)
        \frac{n_1 H_{1}(x_1) }{x_1^{\alpha}}
         \prod_{j=1}^{k+1} (G(x_j))^{n_j - n_{j-1} - \epsilon_j}
\end{eqnarray}
and due to the fact that $n_0 = 0$, we have
\begin{eqnarray}
        S[\{(0,...,0)\}]
    = \label{S0}
        \Psi \left( \frac{y_0}{x_{k+2}} \right)  \prod_{j=1}^{k+1} (G(x_j))^{n_j - n_{j-1}}.
\end{eqnarray}

Finally, by combining \refs{A0}, \refs{A1}, \refs{S1}, and \refs{S0} with \refs{req_sum} we obtain that
\begin{eqnarray*}
\nonumber
\lefteqn{
\int\limits_{0}^{x_1}\left[\int\limits_{0}^{x_2} \cdots \int\limits_{0}^{x_{k+1}} \Psi\left(\frac{y_{k+1}}{x_{k+2}}\right) P_{n_{k+1}-n_{k}}(y_{k}, dy_{k+1}) \cdots P_{n_2-n_1}(y_1, dy_2) \right] P_{n_1}(y_0, dy_1)}\\
	\nonumber
	&=&
	\sum\limits_{({\epsilon}_1,{\epsilon}_2,\cdots,{\epsilon}_{k+1}) \in \mathcal{A}_{k+1}}
		\Psi \left( \frac{y_0}{x_{\tilde{\epsilon}_1}} \right)\Psi \left( \frac{x_{\tilde{\epsilon}_{s_1}}}{x_{k+2}} \right)
		\prod_{i=1}^{s_1-1} \Psi\left(\frac{x_{\tilde{\epsilon}_i}}{x_{\tilde{\epsilon}_{i+1}}}\right)
		\prod_{j=1}^{k+1} (G(x_j))^{n_j - n_{j-1} - \epsilon_j} \left(\frac{(n_j - n_{j-1})H(x_j)}{x_j^\alpha}\right)^{\epsilon_j} \\
  &+& \Psi \left( \frac{y_0}{x_{k+1}} \right) \prod_{j=1}^{k+1} (G(x_j))^{n_j - n_{j-1}}.
\end{eqnarray*}

This completes the induction argument and the proof.
\Halmos

\subsection{Proof of Theorem \ref{th.fn.as}} \label{th.fn.as:proof}
Due to Proposition \ref{prop:Fn}, for any $n \ge 2$, we have
\begin{eqnarray}
		F_n(x)
	\nonumber
	&=&
		\left(F(x) - \frac{1}{x^\alpha} H(x)\right)^{n-1}\left(F(x) + \frac{n-1}{x^\alpha}H(x)\right) \\
	\nonumber
	&=&
		(F(x))^n + n\sum_{k = 1}^{n-1} (-1)^{k-1} {n-1 \choose k-1} \frac{k-1}{k}
		\left(\frac{H(x)}{x^{\alpha}}\right)^k (F(x))^{n-k} \\
	&+& \label{three_elements}
		(-1)^{n-1} (n-1) \left(\frac{H(x)}{x^{\alpha}}\right)^n,
\end{eqnarray}
where \refs{three_elements} follows by the observation that, for any $a \ge 0$, $n \ge 2$, we have
\[
		(1 - a)^{n-1} (1 + a(n-1)) = 1 + n\sum_{k=1}^{n-1} (-1)^{k-1} {n-1 \choose k-1} \frac{k-1}{k} a^k + (-1)^{n-1} (n-1) a^{n}.
\]		
Thus
\begin{eqnarray*} 
		\overline{F}_n(x)
	=
		I_1 + I_2,
\end{eqnarray*}
where		
\[
		I_1
	=	
		1- (F(x))^n
	=
		n\overline{F}(x) (1 + o(1))
\]
as $x \toi$,
and
\begin{eqnarray*}	
		I_2
	=
		n\sum_{k = 1}^{n-1} (-1)^k {n-1 \choose k-1} \frac{k-1}{k}
		\left(\frac{H(x)}{x^{\alpha}}\right)^k (F(x))^{n-k} + (-1)^n (n-1) \left(\frac{H(x)}{x^{\alpha}}\right)^n.
\end{eqnarray*}

Note that $\lim_{x \to \infty} \frac{H(x)}{x^{\alpha}} = 0$ for any measure $\nu \in \mathcal{P}_{+}$ and hence
\[
		n\sum_{k = 3}^{n-1} (-1)^k {n-1 \choose k-1} \frac{k-1}{k}
		\left(\frac{H(x)}{x^{\alpha}}\right)^k (F(x))^{n-k} + (-1)^n (n-1) \left(\frac{H(x)}{x^{\alpha}}\right)^n
	=
		o\left(\frac{1}{2} n (n-1) \left(\frac{H(x)}{x^{\alpha}}\right)^2\right),
\]
as $x \toi$. This completes the proof.
\Halmos

\subsection{Proof of Proposition \ref{thm:CLT}}		\label{thm:CLT:proof}
The following lemma plays a crucial role in further analysis.
\begin{Lemma} \label{Han}
Let $H \in RV_{\theta}$ with $0<\theta<\alpha$. Then, there exists a sequence $\{a_n\}$ such that
\[
\frac{H(a_n)}{(a_n)^{\alpha}}
 = \frac{1}{n} (1 +o(1))
\]
as $n\to \infty$.
\end{Lemma}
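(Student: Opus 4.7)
The plan is to reduce this to the standard existence of asymptotic inverses for regularly varying functions with positive index. Define the function
\[
\phi(x) := \frac{x^{\alpha}}{H(x)}.
\]
Since $H \in RV_{\theta}$, the function $\phi$ is regularly varying with index $\alpha - \theta > 0$. In particular, $\phi(x)\to\infty$ as $x\to\infty$.

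The key step is to invoke the standard theorem on asymptotic inverses of regularly varying functions (see, e.g., Bingham, Goldie and Teugels, Theorem 1.5.12): for any $\phi \in RV_{\beta}$ with $\beta > 0$, there exists a function $\phi^{\leftarrow} \in RV_{1/\beta}$ (the generalized/asymptotic inverse) such that
\[
\phi\bigl(\phi^{\leftarrow}(t)\bigr) \sim t \quad \text{as } t \to \infty.
\]
Apply this with $\beta = \alpha-\theta$ and set $a_n := \phi^{\leftarrow}(n)$. Then $a_n\to\infty$ (because $\phi^{\leftarrow}$ is regularly varying with positive index $1/(\alpha-\theta)$), and $\phi(a_n) \sim n$, which rearranges to
\[
\frac{H(a_n)}{a_n^{\alpha}} = \frac{1}{\phi(a_n)} = \frac{1}{n}\bigl(1 + o(1)\bigr)
\]
as required.

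If one prefers a self-contained argument without citing the asymptotic-inverse theorem, the sequence $a_n$ can be constructed explicitly as $a_n := \inf\{x > 0 : \phi(x) \geq n\}$. Since $\phi$ is eventually unbounded, $a_n$ is well-defined for large $n$ and tends to infinity. By the Karamata representation, $\phi(x) = x^{\alpha-\theta}/L(x)$ for a slowly varying $L$, and one checks using the uniform convergence theorem for slowly varying functions that $\phi(a_n)/n \to 1$; this is the only place any real work is required, and the main subtlety is handling the possible non-monotonicity of $\phi$, which is dealt with via the standard argument that $\phi(a_n - )\leq n \leq \phi(a_n)$ together with the asymptotic regularity of $\phi$. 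The main obstacle is thus essentially bookkeeping around the generalized inverse, rather than any substantive analytic difficulty.
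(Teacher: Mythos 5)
Your argument is correct and is essentially identical to the paper's own proof: both define $W(x)=x^{\alpha}/H(x)\in RV_{\alpha-\theta}$ and apply Theorem 1.5.12 of Bingham, Goldie and Teugels to obtain an asymptotic inverse $V$ with $W(V(x))\sim x$, then set $a_n=V(n)$. The additional self-contained construction you sketch is a fine supplement but not needed.
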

\begin{proof}
First, observe that $ W(x) := x^{\alpha}/H(x)$ is regularly varying function with parameter $\alpha - \theta$ as $x \toi$. Then, due to  Theorem 1.5.12. in \cite{Bin87}, there exists an increasing function $V(x)$ such that
$W(V(x)) = x(1 + o(1)),$ as $x \toi$. Now, in order to complete the proof it suffices to take $a_n = V(n)$.
\end{proof}

\noindent {\bf Proof of Proposition \ref{thm:CLT}.} Using \refs{Williamson:n} and \refs{F:eq}, the Williamson transform of $a_n^{-1} X_n$ is given by
\begin{eqnarray} \label{Ganz}
    \left[G\left(\frac{a_n}{x}\right)\right]^n = \left( F\left(\frac{a_n}{x}\right) - \frac{x^{\alpha}}{a_n^{\alpha}} H \left(\frac{a_n}{x}\right)\right)^n.
\end{eqnarray}
In order to prove (i)  observe that under assumption of the finiteness of $m_\nu^{(\alpha)}$ we have
\[
\lim\limits_{n \to \infty} H \left(\frac{n^{1/\alpha}}{x}\right) = m_{\nu}^{(\alpha)}
\ \ \ \ \
{\rm and}
\ \ \ \ \
\lim_{n \toi} F\left(\frac{n^{1/\alpha}}{x}\right) = 1,
\]
which, by \refs{Ganz}, yields that
\[
\lim\limits_{n \to \infty} \left[G\left(\frac{n^{1/\alpha}}{x}\right) \right]^n = e^{-m_{\nu}^{(\alpha)}  x^{\alpha}}.
\]

Due to Proposition \ref{prop:Fn}, (i) there exists uniquely determined random variable $X$ with cdf \refs{cdf:mu} and pdf \refs{pdf:mu} such that $e^{-m_{\nu}^{(\alpha)}  x^{\alpha}} $ is its Williamson transform. This completes the proof of the case (i). \\

In order to prove (ii), notice that, due to Theorem 1.5.8 in \cite{Bin87}  $\overline{F} \in RV_{\theta-\alpha}$, implies that $H \in RV_{\theta}$.
Hence, for any $z > 0$, we have
\[
H\left(\frac{a_n}{x}\right) = x^{-\theta} H(a_n) (1 + o(1)),
\]

Moreover, by Lemma \ref{Han} we can choose a sequence $\{a_n\}$ such that
\[
\frac{H(a_n)}{a_n^{\alpha}} = \frac{1}{n} (1 + o(1))
\]

as $n \toi$. Thus,
\begin{eqnarray*}
    \lim_{n \toi} G\left[\left(\frac{a_n}{x}\right)^n\right]
=
    \lim\limits_{n \to \infty}\left( F\left(\frac{a_n}{x}\right) - \frac{x^{\alpha}}{a_n^{\alpha}} H \left(\frac{a_n}{x}\right)\right)^n
 =
    e^{-x^{\alpha- \theta}}.
\end{eqnarray*}
Due to Proposition \ref{prop:Fn}, (i) there exists random variable $X$ with cdf \refs{cdf:reg} and pdf \refs{pdf:reg} such that $e^{-x^{\alpha- \theta}}$ is its Williamson transform.
This completes the proof.
\Halmos

\subsection{Proof of Theorem \ref{thm:fidi_RV}} \label{thm:fidi_RV:proof}
{\bf Proof.} Let $0 =: t_0 \le t_1 \le t_2 \le \cdots \le t_k$, where $k \in \mathbb{N}$. By Theorem \ref{thm:b}, the distribution of \\
$\left( Z_{n}(t_1), Z_{n}(t_2),\cdots, Z_{n}(t_k) \right)$ is given by
\begin{eqnarray}
\nonumber
\lefteqn{
    P \left( Z_{n}(t_1)\le z_1, Z_{n}(t_2)\le z_2,\cdots, Z_{n}(t_k)\le z_k \right)}\\
\nonumber
& = &
    P \left( X_{[n t_1]} \le a_n z_1,  X_{[n t_2]} \le a_n z_2,\cdots, X_{[n t_k]} \le a_n z_k \right) \\
& = & \label{fdd_zn}
	\sum\limits_{({\epsilon}_1,{\epsilon}_2,\cdots,{\epsilon}_k) \in \{0,1\}^k}
	\prod_{i=1}^{s-1} \Psi\left(\frac{z_{\tilde{\epsilon}_i}}{z_{\tilde{\epsilon}_{i+1}}}\right)
	\prod_{j=1}^k (G(a_n z_j))^{[n t_j] - [n t_{j-1}] - \epsilon_j} \left(\frac{([n t_j] - [n t_{j-1}] ) H(a_n z_j)}{a_n ^{\alpha} z_j^\alpha}\right)^{\epsilon_j},
\end{eqnarray}
where
\[
		\prod_{i=1}^{s-1} \Psi\left(\frac{z_{\tilde{\epsilon}_i}}{z_{\tilde{\epsilon}_{i+1}}}\right) = 1 \ \ \ {\rm for} \ \ \ s \in \{0,1\}.
\]
In analogous way to the proof of Theorem \ref{thm:CLT} (i) we obtain
\begin{equation} \label{G_zn_conv:mu}
\lim\limits_{n \toi} G\left(a_n z_j \right)^{[nt_j]-[nt_{j-1}]-\epsilon_j} =
		\lim_{n \toi} \left(F(a_n z_j) - \frac{H(a_n z_j)}{(a_n z_j)^\alpha} \right)^{[nt_j]-[nt_{j-1}]-\epsilon_j} = \exp\left\{ - m_{\nu}^{(\alpha)} \left(t_j-t_{j-1}\right)   z_j^{\theta-\alpha} \right\}.
\end{equation}
and
\begin{equation} \label{H_zn_conv:mu}
\lim\limits_{n \to \infty} \left(\frac{([n t_j] - [n t_{j-1}] ) H(a_n z_j)}{a_n ^{\alpha} z_j^\alpha}\right)^{\epsilon_j} =
\left(t_j - t_{j-1}\right) z_j^{-\alpha} m_{\nu}^{(\alpha)}
\end{equation}

In order to complete the proof of (i) it suffices to pass with $n \toi$ in
\refs{fdd_zn} applying  \refs{G_zn_conv:mu} and \refs{H_zn_conv:mu}.

In order to prove (ii), notice that similarly to the proof of Theorem \ref{thm:CLT}, for any $t_j, z_j > 0$ and $1 \le j \le k$, we obtain
\begin{eqnarray} \label{G_zn_conv}
\lim\limits_{n \toi} G\left(a_n z_j \right)^{[nt_j]-[nt_{j-1}]-\epsilon_j} =
		\lim_{n \toi} \left(F(a_n z_j) - \frac{H(a_n z_j)}{(a_n z_j)^\alpha} \right)^{[nt_j]-[nt_{j-1}]-\epsilon_j} = \exp\left\{ -\left(t_j-t_{j-1}\right)   z_j^{\theta-\alpha} \right\}
\end{eqnarray}
and
\begin{eqnarray} \label{H_zn_conv}
\lim\limits_{n \to \infty} \left(\frac{([n t_j] - [n t_{j-1}] ) H(a_n z_j)}{a_n ^{\alpha} z_j^\alpha}\right)^{\epsilon_j} =
\left(t_j - t_{j-1}\right) z_j^{\theta-\alpha}.
\end{eqnarray}

In order to complete the proof it suffices to pass with $n \toi$ in
\refs{fdd_zn} applying  \refs{G_zn_conv} and \refs{H_zn_conv}.
\Halmos\\

{\bf Acknowledgements.} B. Jasiulis-Go\l dyn and E. Omey were supported by "First order Kendall maximal autoregressive processes and their applications", within the POWROTY/REINTEGRATION programme of the Foundation for Polish Science co-financed by the European Union under the European Regional Development Fund.

\addcontentsline{toc}{section}{References}


\begin{thebibliography}{19}
%
%
\bibitem{Alpuim2} M.T. Alpuim, N.A. Catkan, J. H\"{u}sler, Extremes and clustering of non-stationary max-AR(1) sequences.
{\it Stoch. Proc. Appl.}, {\bf 56},  171--184, 1995.
%
%
\bibitem{Arnold1} B. C. Arnold, {\it Pareto Processes.} {Stochastic Processes: Theory and Methods. Handbook of Statistics}, {\bf 19}, 1--33, 2001.
%
%
\bibitem{Arnold2} B. C. Arnold, {\it Pareto Distributions.} {Monographs on Statistics and Applied Probability}, {\bf 140}, Taylor \& Francis Group, 2015.
%
%
\bibitem{Bin71} N. H. Bingham, Factorization theory and domains of attraction for generalized convolution algebra.
{\it Proc. London Math. Sci., Infinite Dimensional Analysis, Quantum Probability and Related Topics} {\bf 23}(4), 16--30, 1971.
%
%
\bibitem{Bin84} N. H. Bingham, On a theorem of K{\l}osowska about generalized convolutions, {\it Coll. Math.} {\bf 48}(1), 117--125, 1984.
%
%
\bibitem{Bin87} N. H. Bingham, C. M. Goldie and J. L. Teugels, {\it Regular variation.} {Cambridge University Press, Cambridge}, 1987.
%
%
\bibitem{BJMR} M. Borowiecka-Olszewska, B.H.  Jasiulis-Go{\l}dyn, J.K. Misiewicz, J. Rosi\'nski,
L\'evy processes and stochastic integral in the sense of generalized convolution. {\it Bernoulli},  {\bf 21}(4), 2513--2551, 2015.
%
%
\bibitem{Embr97}
P. Embrechts, C. Kl\"{u}ppelberg, T. Mikosch,
{\it Modelling Extremal Events for Insurance and Finance.} Springer, Berlin, 1997.
%
%
\bibitem{Ferreira} M. Ferreira,
On the extremal behavior of a Pareto process: an alternative for ARMAX modeling.
{\it Kybernetika} {\bf 48}(1), 31--49, 2012.
%
%
\bibitem{pRARMAX} M. Ferreira, L. Canto e Castro,
Modeling rare events through a pRARMAX process. {\it Journal of Statistical Planning and Inference} {\bf 140}, 3552--3566, 2010.
%
%
\bibitem{GenNesRiv} C. Genest, J. Ne\v{s}lehov\'a, L.,P. Rivest,
The class of multivariate max-id copulas with $l_1-$ norm symmetric exponent measure,
{\it Bernoulli}, {\bf 24}(4B), 3751--3790, 2018.
%
%
\bibitem{Gilewski1} J. Gilewski, Generalized convolutions and delphic semigroups. {\it Coll. Math.}, {\bf 25}, 281--289, 1972.
%
%
\bibitem{Gilewski2} J. Gilewski, K. Urbanik, Generalized convolutions and generating functions.
{\it Bull. Acad. Sci. Polon. Ser. Math. Astr. Phys.}, {\bf 16}, 481--487, 1968.
%
%
\bibitem{KendallWalk} B.H. Jasiulis-Go{\l}dyn, Kendall random walks. {\it Probab. Math. Stat.}, {\bf 36}(1), 165--185, 2016.
%
%
\bibitem{JasKula} B.H. Jasiulis-Go{\l}dyn, A. Kula,
The Urbanik generalized convolutions in the non-commutative probability and a forgotten method of constructing generalized convolution.
{\it Proceedings - Math. Sci.}, {\bf 122}(3), 437--458, 2012.
%
%
\bibitem{uniqKendall} B. H. Jasiulis-Go{\l}dyn, J. K. Misiewicz, On the Uniqueness of the Kendall Generalized Convolution.
{\it J. Theor. Probab.}, {\bf 24}(3), 746-755, 2011.
%
%
\bibitem{Poisson} B. H. Jasiulis-Go{\l}dyn, J. K. Misiewicz,
Classical definitions of the Poisson process do not coincide in the case of weak generalized convolution.
{\it Lith. Math. J.}, {\bf 55}(4), 518-542, 2015.
%
%
\bibitem{factor} B. H. Jasiulis-Go{\l}dyn, J. K. Misiewicz, Kendall random walk, Williamson transform and the corresponding Wiener-Hopf factorization. {\it Lith. Math. J.}, {\bf 57}(4), 479-489, 2017.
%
%
\bibitem{renewalKendall} B. H. Jasiulis-Go{\l}dyn, K. Naskr\k{e}t, J.K. Misiewicz, E. Omey, Renewal theory for extremal Markov sequences of the Kendall type, to appear: {\it Stoch. Proc. Appl.}, 2019.
%
%
\bibitem{Kendall} D. G. Kendall, Delphic semi-groups, infinitely divisible regenerative phenomena, and the arithmetic of p-functions.
{\it Z. Wahrscheinlichkeitstheorie und Verw. Gebiete}, {\bf 9}(3), 163--195, 1968.
%
%
\bibitem{King} J.F.C. Kingman,
             Random Walks with Spherical Symmetry. {\it Acta
             Math.}, {\bf 109}(1), 11--53, 1963.
%
%
%
%
\bibitem{Larsson} M. Larsson, J. Ne\v{s}lehov\'a,  Extremal behavior of Archimedean copulas, {\it Adv. Appl. Probab.}, {\bf 43}, 195-216, 2011.

\bibitem{Lewis} P.A.W. Lewis, Ed McKenzie, Minification Processes and Their Transformations. {\it Journal of Applied Probability}, {\bf 28}(1), 45--57, 1991.
%
%
\bibitem{Lopez} J. Lopez-Diaz, M. Angeles Gil, P. Grzegorzewski, O. Hryniewicz, J. Lawry,
{\it Soft Methodolody and Random Information Systems.} {Advances in Inteligent and Soft computing}, Springer, 2004.
%
%
\bibitem{Neslehova1} A.J. McNeil, J. Ne\v{s}lehov\'a, From Archimedean to Liouville Copulas, {\it J. Multivariate Anal.} {\bf 101}(8), 1771--1790, 2010.
%
%
\bibitem{Neslehova2} A.J. McNeil, J. Ne\v{s}lehov\'a,
Multivariate Archimedean Copulas, $d-$ monotone Functions and $l_1-$ norm Symmetric Distributions,
{\it Ann. Statist.}, {\bf 37}(5B), 3059--3097, 2009.
%
%
\bibitem{Misiewicz2018} J. Misiewicz, Generalized convolutions and the Levi-Civita functional equation, {\it Aequationes Mathematicae}, {\bf 92}(5), 911-933, 2018.
%
%
\bibitem{Misiewicz2019} J. Misiewicz, V. Volkovich, Symmetric weakly-stable random vector is pseudo-isotropic, to appear: {\it J. Math. Anal. Appl.}, 2019.
%
%
\bibitem{Urbanik64} K. Urbanik, Generalized convolutions I-V. {\it Studia Math.}, {\bf 23}(1964), 217--245, {\bf 45}(1973), 57--70, {\bf 80}(1984), 167--189, {\bf 83}(1986), 57--95, {\bf 91}(1988), 153--178.
%
%
%
%
\bibitem{Williamson} R.E. Williamson, Multiply monotone functions and their Laplace transforms. {\it Duke Math. J.} {\bf 23}, 189--207, 1956.
%
%
\bibitem{Yeh} H. Ch. Yeh, B.C. Arnold, C.A. Robertson,
        Pareto Processes. {\it Journal of Applied Probability}, {\bf 25}(2), 291--301, 1988.
\end{thebibliography}
\end{document}